\documentclass[12pt,a4paper]{article}
\usepackage{mathrsfs}
\usepackage{indentfirst}
\setlength{\parskip}{3\lineskip}
\usepackage{amsmath,amssymb,amsfonts,amsthm,graphics,graphicx}
\usepackage{makeidx}
\usepackage{color}

\setlength{\textwidth}{160mm} \setlength{\textheight}{235mm}
\setlength{\headheight}{3cm} \setlength{\topmargin}{0pt}
\setlength{\headsep}{0pt} \setlength{\oddsidemargin}{0pt}
\setlength{\evensidemargin}{0pt}

\parindent 15pt
\voffset -25mm \rm
\parskip=6pt

\newtheorem{theorem}{Theorem}

\newtheorem{lemma}{Lemma}

\newtheorem{conjecture}{Conjecture}
\begin{document}
\title{\Large\bf Further hardness results on the\\ generalized
connectivity of graphs\footnote{Supported by NSFC No.11071130 and
the ``973" program.}}
\author{\small Lily Chen, Xueliang Li, Mengmeng Liu, Yaping Mao
\\
\small Center for Combinatorics and LPMC-TJKLC
\\
\small Nankai University, Tianjin 300071, China
\\
\small lily60612@126.com; lxl@nankai.edu.cn;\\
\small liumm05@163.com; maoyaping@ymail.com.}
\date{}
\maketitle
\begin{abstract}
The generalized $k$-connectivity $\kappa_k(G)$ of a graph $G$ was
introduced by Chartrand et al. in 1984, which is a nice
generalization of the classical connectivity. Recently, as a natural
counterpart, Li et al. proposed the concept of generalized
edge-connectivity for a graph. In this paper, we determine the
computational complexity of the generalized connectivity and
generalized edge-connectivity of a graph. Two conjectures are also
proved to be true.

{\flushleft\bf Keywords}: connectivity, edge-connectivity, Steiner
tree, internally disjoint trees, edge-disjoint trees, generalized
connectivity, generalized edge-connectivity, complexity,
polynomial-time algorithm, $\mathcal {N}\mathcal {P}$-complete.\\[2mm]
{\bf AMS subject classification 2010:} 05C40, 05C05, 68Q25, 68R10.
\end{abstract}

\section{Introduction}

All graphs considered in this paper are undirected, finite and
simple. We refer to the book \cite{bondy} for graph theoretical
notation and terminology not described here. The generalized
connectivity of a graph $G$, introduced by Chartrand et al. in
\cite{Chartrand1}, is a natural and nice generalization of the
concept of the standard (vertex-)connectivity. For a graph $G(V,E)$
and a set $S\subseteq V(G)$ of at least two vertices, \emph{an
$S$-Steiner tree} or \emph{a Steiner tree connecting $S$} (or
simply, \emph{an $S$-tree}) is such a subgraph $T(V',E')$ of $G$
that is a tree with $S\subseteq V'$. Two Steiner trees $T$ and $T'$
connecting $S$ are said to be \emph{internally disjoint} if
$E(T)\cap E(T')=\varnothing$ and $V(T)\cap V(T')=S$. For $S\subseteq
V(G)$ and $|S|\geq 2$, the \emph{generalized local connectivity}
$\kappa(S)$ is the maximum number of internally disjoint trees
connecting $S$ in $G$. Note that when $|S|=2$ a Steiner tree
connecting $S$ is just a path connecting $S$. For an integer $k$
with $2\leq k\leq n$, the \emph{generalized $k$-connectivity}
$\kappa_k(G)$ of $G$ is defined as $\kappa_k(G)= min\{\kappa(S) :
S\subseteq V(G) \ and \ |S|=k\}$. Clearly, when $|S|=2$,
$\kappa_2(G)$ is nothing new but the connectivity $\kappa(G)$ of
$G$, that is, $\kappa_2(G)=\kappa(G)$, which is the reason why one
addresses $\kappa_k(G)$ as the generalized connectivity of $G$.  Set
$\kappa_k(G)=0$ when $G$ is disconnected. Results on the generalized
connectivity can be found in \cite{Chartrand2,
LLSun, LLL1, LLL2, LL, LLZ, Okamoto}.

As a natural counterpart of the generalized connectivity, Li et al.
introduced the concept of generalized edge-connectivity in
\cite{LMS}. For $S\subseteq V(G)$ and $|S|\geq 2$, the
\emph{generalized local edge-connectivity} $\lambda(S)$ is the
maximum number of edge-disjoint trees connecting $S$ in $G$. For an
integer $k$ with $2\leq k\leq n$, the \emph{generalized
$k$-edge-connectivity} $\lambda_k(G)$ of $G$ is then defined as
$\lambda_k(G)= min\{\lambda(S) : S\subseteq V(G) \ and \ |S|=k\}$.
It is also clear that when $|S|=2$, $\lambda_2(G)$ is nothing new
but the standard edge-connectivity $\lambda(G)$ of $G$, that is,
$\lambda_2(G)=\lambda(G)$, which is the reason why we address
$\lambda_k(G)$ as the generalized edge-connectivity of $G$. Also set
$\lambda_k(G)=0$ when $G$ is disconnected.

The generalized edge-connectivity is related to an important
problem, which is called the \emph{Steiner Tree Packing Problem}.
For a given graph $G$ and $S\subseteq V(G)$, this problem asks to
find a set of maximum number of edge-disjoint Steiner trees
connecting $S$ in $G$. One can see that the Steiner Tree Packing
Problem studies local properties of graphs, but the generalized
edge-connectivity focuses on global properties of graphs. The
generalized edge-connectivity and the Steiner Tree Packing Problem
have applications in $VLSI$ circuit design, see \cite{Grotschel1,
Grotschel2, Jain, Sherwani}. In this application, a Steiner tree is
needed to share an electronic signal by a set of terminal nodes.
Another application, which is our primary focus, arises in the
Internet Domain. Imagine that a given graph $G$ represents a
network. We choose arbitrary $k$ vertices as nodes. Suppose that one
of the nodes in $G$ is a \emph{broadcaster}, and all the other nodes
are either \emph{users} or \emph{routers} (also called
\emph{switches}). The broadcaster wants to broadcast as many streams
of movies as possible, so that the users have the maximum number of
choices. Each stream of movie is broadcasted via a tree connecting
all the users and the broadcaster. So, in essence we need to find
the maximum number Steiner trees connecting all the users and the
broadcaster, namely, we want to get $\lambda (S)$, where $S$ is the
set of the $k$ nodes. Clearly, it is a Steiner tree packing problem.
Furthermore, if we want to know whether for any $k$ nodes the
network $G$ has the above properties, then we need to compute
$\lambda_k(G)$ in order to prescribe the reliability and the
security of the network.

As we know, for any graph $G$, we have polynomial-time algorithms to
get the connectivity $\kappa(G)$ and the edge-connectivity
$\lambda(G)$. A natural question is whether there is a
polynomial-time algorithm to get the $\kappa_3(G)$ or
$\lambda_3(G)$, or more generally $\kappa_k(G)$ or $\lambda_k(G)$.

In \cite{LLZ}, the authors described a polynomial-time algorithm to
decide whether $\kappa_3(G)\geq \ell$.

\begin{theorem}\cite{LLZ}\label{th1}
Given a fixed positive integer $\ell$, for any graph $G$ the problem
of deciding whether $\kappa_3(G)\geq \ell$ can be solved by a
polynomial-time algorithm.
\end{theorem}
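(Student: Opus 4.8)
The plan is to reduce the problem to polynomially many instances of the vertex-disjoint paths problem with a bounded number of demand pairs, and to invoke the polynomial-time disjoint paths algorithm of Robertson and Seymour. Since $\kappa_3(G)\geq\ell$ means $\kappa(S)\geq\ell$ for \emph{every} $S\subseteq V(G)$ with $|S|=3$, and there are only $\binom{n}{3}=O(n^3)$ such sets, it suffices to decide in polynomial time, for a fixed set $S=\{x,y,z\}$ and the fixed integer $\ell$, whether $\kappa(S)\geq\ell$.

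Fix $S=\{x,y,z\}$. The first step is a normalization of the trees. If $T_1,\dots,T_\ell$ are internally disjoint $S$-trees, replacing each $T_i$ by an inclusion-minimal subtree that still contains $S$ keeps the family internally disjoint, because such a subtree still meets every other $T_j$ exactly in $S$. A minimal $S$-tree on three terminals has all of its leaves in $S$, hence at most three leaves, so it is either a path whose interior contains exactly one of $x,y,z$ or a subdivided star with a single vertex of degree $3$; in either case it has a well-defined \emph{center} $w\in V(G)$ (the middle terminal, respectively the branch vertex), and it equals the union of three pairwise internally disjoint $w$--$x$, $w$--$y$, $w$--$z$ paths, one of which is trivial precisely when $w\in S$. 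Therefore $\kappa(S)\geq\ell$ holds if and only if there are vertices $w_1,\dots,w_\ell$ (distinct whenever they lie outside $S$, since two trees cannot share a non-terminal center) and $3\ell$ legs — the paths forming the $\ell$ spiders — which are pairwise internally disjoint and coincide only in the two permitted ways: two legs of $T_i$ meet at $w_i$, and two legs ending at the same terminal meet at that terminal.

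Since $\ell$ is fixed, I would enumerate the $O(n^\ell)$ choices of $(w_1,\dots,w_\ell)$. For each choice, form the auxiliary graph $G^{*}$ by deleting $x,y,z,w_1,\dots,w_\ell$ from $G$ and, for each of these deleted vertices, adding one fresh copy per leg that has to be attached to it, every copy joined to those neighbours of the original vertex that survived the deletion; the $\le 3\ell$ demand pairs are then the prescribed pairs of copies. A family of $\ell$ internally disjoint spiders with the chosen centers corresponds exactly to a system of pairwise vertex-disjoint paths between these pairs: vertex-disjointness is precisely the requirement that no non-terminal, non-center vertex lies in two trees, while the copies absorb the permitted coincidences at terminals and centers. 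As $\ell$ is constant, $G^{*}$ has $O(n)$ vertices, and existence of the required $\le 3\ell$ disjoint paths is decidable in polynomial (indeed $O(n^3)$) time by the Robertson--Seymour algorithm. Thus $\kappa(S)\geq\ell$ iff some guess of centers yields a feasible system, from which the $\ell$ trees are recovered; the total running time — $O(n^3)$ sets $S$ times $O(n^\ell)$ center guesses times the disjoint-paths cost — is $n^{O(\ell)}$, polynomial for fixed $\ell$.

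The main difficulty is bookkeeping rather than deep combinatorics: one must verify that a solution of the auxiliary disjoint-paths instance always reassembles into a genuine family of pairwise internally disjoint $S$-trees — in particular that no leg runs through the interior of a terminal or of another center, and that no two reassembled trees accidentally share an ordinary vertex — and one must treat the degenerate configurations separately, namely legs of length one (where the ``leg'' is a single edge $w_i x$ that the deletion destroys, so one just checks the adjacency), legs of length two (where a copy may coincide with a surviving neighbour), and the case $w_i\in S$, where $T_i$ degenerates to a path carrying only two legs. Each of these is a finite case analysis; once it is dispatched, deleting the terminals and centers and re-attaching the legs through fresh copies is exactly what converts every residual ``shared endpoint'' condition into ordinary vertex-disjointness, so that the Robertson--Seymour algorithm applies.
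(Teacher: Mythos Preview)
The paper does not give a proof of Theorem~\ref{th1}; it is quoted from \cite{LLZ} (and subsumed by Theorem~\ref{th2} from \cite{LL}) purely as background, so there is no argument in the present paper to compare your attempt against.  That said, your sketch is exactly the standard route used in \cite{LLZ,LL}: loop over the $\binom{n}{3}$ triples $S$, reduce each $S$-tree to a minimal spider, guess the $\ell$ centres, and feed the resulting bounded collection of terminal pairs to the Robertson--Seymour disjoint-paths algorithm.

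Your outline is correct, and you have already isolated the only real nuisances.  Two remarks if you turn this into a full proof.  First, for the length-one legs you should not merely ``check the adjacency'' but also enumerate, over all legs, \emph{which} edge of $G[D]$ (with $D=\{x,y,z,w_1,\dots,w_\ell\}$) each short leg claims, so that two trees are never assigned the same edge; this is a further finite enumeration of size depending only on $\ell$.  Second, in the backward direction it is worth one explicit sentence that the three legs reassembled at $w_i$ form a tree (no two share an internal vertex) --- this is immediate from vertex-disjointness in $G^{*}$, since the interiors live in $V(G)\setminus D$ and the copies at $w_i$ are distinct, but it is the step where a careless construction could fail.  Your comment about length-two legs appears to be a non-issue: such a leg is simply a path $(\text{copy of }w_i)\,u\,(\text{copy of }t)$ with $u\notin D$, and nothing special happens there.
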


As a continuation of their investigation, S. Li and X. Li later
turned their attention to the general $\kappa_k$ and obtained the
following results in \cite{LL}.

\begin{theorem}\cite{LL}\label{th2}
For two fixed positive integers $k$ and $\ell$, given a graph $G$, a $k$-subset $S$
of $V(G)$, the problem of deciding whether
there are $\ell$ internally disjoint trees connecting $S$ can be solved
by a polynomial-time algorithm.
\end{theorem}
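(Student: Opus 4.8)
The plan is to use that $k$ and $\ell$ are constants, so that the $\ell$ trees we seek, if they exist, carry only a bounded amount of ``topological'' data, and to reduce the remaining, genuinely unbounded part of the problem --- routing long internally disjoint paths --- to the disjoint paths problem, which admits a polynomial-time algorithm for any fixed number of terminal pairs by the celebrated theorem of Robertson and Seymour.

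First I would establish a structural normalization. If there are $\ell$ internally disjoint $S$-trees at all, then there are $\ell$ internally disjoint $S$-trees $T_1,\dots,T_\ell$ each of which is inclusion-minimal, i.e.\ no proper subtree of $T_i$ contains $S$; equivalently, every leaf of $T_i$ lies in $S$. A standard degree count in a tree (with $n_d$ the number of vertices of degree $d$, one has $\sum_{d\ge 3}(d-2)n_d=n_1-2$) then shows that $T_i$ has at most $k-2$ vertices of degree at least $3$, hence at most $k-2$ such vertices outside $S$. Suppressing the degree-$2$ vertices of $T_i$ that do not lie in $S$, we obtain a tree $\mathcal T_i$ on the vertex set $S\cup W_i$, where $W_i\subseteq V(G)\setminus S$ with $|W_i|\le k-2$ and every leaf of $\mathcal T_i$ lies in $S$, each of whose edges is realized by a path of $G$ joining its two endpoints. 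Because $T_1,\dots,T_\ell$ are internally disjoint, the sets $W_1,\dots,W_\ell$ are pairwise disjoint and disjoint from $S$, the paths realizing all edges of all the $\mathcal T_i$ are pairwise internally disjoint and pairwise edge-disjoint, and none of them uses internally a vertex of $S\cup W_1\cup\dots\cup W_\ell$.

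Next, the algorithm I propose enumerates the bounded data and calls a disjoint-paths subroutine for everything else. It ranges over all choices of pairwise disjoint $W_1,\dots,W_\ell\subseteq V(G)\setminus S$ with $|W_i|\le k-2$ --- only $O(n^{\ell(k-2)})$ of these, a polynomial number --- and, for each such choice, over all tuples $(\mathcal T_1,\dots,\mathcal T_\ell)$ where $\mathcal T_i$ is a tree on $S\cup W_i$ with all leaves in $S$; since $|S\cup W_i|\le 2k-2$ is a constant, the number of such tuples is bounded by a function of $k$ and $\ell$ alone. For each fixed choice of the $W_i$ and $\mathcal T_i$, the remaining task is to decide whether the at most $\ell(2k-3)$ edges of the trees $\mathcal T_i$ can be simultaneously realized by paths of $G$ that are pairwise internally disjoint, pairwise edge-disjoint, and internally avoid $S\cup W_1\cup\dots\cup W_\ell$ (two such paths being allowed to share an endpoint precisely when the corresponding $\mathcal T_i$-edges share it). After routine gadget constructions --- subdividing each edge once to turn edge-disjointness into vertex-disjointness, splitting each prescribed endpoint into one private copy per incident demand so that the endpoints of distinct demands become distinct vertices, and noting that the vertices one must avoid internally are exactly the demand endpoints and are therefore automatically unavailable in a vertex-disjoint solution --- this becomes an instance of the disjoint paths problem with a bounded number of terminal pairs, solved in polynomial ($O(n^3)$) time by the Robertson--Seymour algorithm. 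We answer YES iff for some choice of the $W_i$ and $\mathcal T_i$ such a realization exists.

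Correctness is then routine from the normalization: $\ell$ internally disjoint minimal $S$-trees yield, for the correct guess, a feasible routing, and any feasible routing glues back into $\ell$ $S$-trees that pairwise meet only inside $S$ and hence are internally disjoint. The running time is the product of polynomially many guesses, constantly many topologies, and the polynomial-time subroutine, so it is polynomial for fixed $k$ and $\ell$. The main obstacle --- and the reason the naive ``guess all Steiner vertices'' idea fails --- is that a minimal $S$-tree may still contain arbitrarily many degree-$2$ Steiner vertices; the resolution is to guess only the at most $k-2$ branch vertices of each tree and to offload the routing of the arbitrarily long but internally disjoint connecting paths to the disjoint paths machinery.
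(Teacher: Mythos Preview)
The paper does not contain a proof of this theorem: it is quoted from \cite{LL} and used as a black box (notably in the proof of Theorem~\ref{th7}). There is therefore nothing in the present paper to compare your argument against.

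That said, your proposal is correct and is, to my knowledge, exactly the approach used in the cited source. The two key ideas---bounding the number of branch vertices of a leaf-minimal $S$-tree by $k-2$ via the identity $\sum_{d\ge 3}(d-2)n_d=n_1-2$, and offloading the realization of the at most $\ell(2k-3)$ topological edges to the Robertson--Seymour disjoint-paths algorithm---are the standard ones. Your gadget description (subdivide every edge, then split each terminal into one private copy per incident demand) is the right way to convert ``edge-disjoint and internally vertex-disjoint, with prescribed shared endpoints'' into a genuine vertex-disjoint paths instance with a bounded number of terminal pairs; in particular, once every demand is routed, each terminal copy is occupied by its own path, so no other path can traverse it, which is what enforces condition (c) that no path meets $S\cup W_1\cup\dots\cup W_\ell$ internally. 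The enumeration bounds ($O(n^{\ell(k-2)})$ choices for the $W_i$, and a number of labelled tree shapes on $\le 2k-2$ vertices depending only on $k$) are also correct, so the overall running time is polynomial for fixed $k$ and $\ell$.
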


\begin{theorem}\cite{LL}\label{th3}
For any fixed integer $k\geq 4$, given a graph $G$, a $k$-subset $S$
of $V(G)$ and an integer $ \ell \ (2\leq \ell\leq n-2)$, deciding whether
there are $\ell$ internally disjoint trees connecting $S$, namely
deciding whether $\kappa(S)\geq \ell$, is $\mathcal {N}\mathcal
{P}$-complete.
\end{theorem}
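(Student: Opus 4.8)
The plan is to establish both membership in $\mathcal{N}\mathcal{P}$ and $\mathcal{N}\mathcal{P}$-hardness. Membership is routine: a certificate is a list of $\ell$ subgraphs $T_1,\dots,T_\ell$ of $G$, and in polynomial time one checks that each $T_i$ is a tree with $S\subseteq V(T_i)$, that $E(T_i)\cap E(T_j)=\varnothing$ for all $i\neq j$, and that $V(T_i)\cap V(T_j)=S$ for all $i\neq j$; such a family certifies $\kappa(S)\geq\ell$, so the problem lies in $\mathcal{N}\mathcal{P}$.

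For hardness I would first treat the case $k=4$ and reduce from a standard $\mathcal{N}\mathcal{P}$-complete problem; for concreteness, \textsc{3-Satisfiability}. Given a formula $\phi$ with variables $x_1,\dots,x_n$ and clauses $C_1,\dots,C_m$, I would build a graph $G_\phi$ with a fixed terminal set $S=\{a,b,c,d\}$ assembled from three kinds of pieces. First, a rigid ``backbone'' part that every Steiner tree connecting $S$ is forced to pass through, made deliberately thin (small degrees, many cut vertices) so that a tree has essentially no routing freedom except inside the gadgets. Second, for each variable $x_i$ a \emph{selector gadget} admitting exactly two internally disjoint traversals, to be read as $x_i$ being true or false. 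Third, for each clause $C_j$ a \emph{clause gadget} that can be absorbed into a Steiner tree only when at least one of its three literal-slots is left ``free'' by the selectors. Since Theorem~\ref{th2} already makes the problem polynomial when $\ell$ is fixed, the target threshold must grow with $\phi$; I would set $\ell$ of the order of $n+m$ (the exact value dictated by how the gadgets are wired) and arrange that $\kappa(S)\geq\ell$ in $G_\phi$ exactly when $\phi$ is satisfiable, afterwards checking that this $\ell$ lies in the admissible interval $[2,n_\phi-2]$ for $G_\phi$.

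The forward direction is then easy: from a satisfying assignment one routes $\ell$ pairwise internally disjoint Steiner trees connecting $S$, using each selector gadget in the traversal dictated by the value of its variable and feeding each clause gadget through a literal that the assignment makes true. I expect the converse to be the real obstacle. Steiner trees connecting four prescribed vertices have many possible shapes --- a single Steiner vertex of degree four joined to all of $a,b,c,d$, two Steiner vertices each joined to a pair of terminals, paths running through some of the terminals, and mixtures of these --- and the whole difficulty is to design $G_\phi$ so that none of these degrees of freedom lets a tree cheat: shortcut around a selector gadget, split its route between the two ``sides'' of a selector, or exploit a clause gadget whose literal-slots are all occupied. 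Ruling out every such deviation, and thereby showing that any family of $\ell$ internally disjoint $S$-trees must induce a consistent, clause-satisfying assignment, is the delicate and case-heavy core of the argument.

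Finally, I would lift the result from $k=4$ to every fixed $k\geq4$ by a padding step applied $k-4$ times: given an instance $(G,\{a,b,c,d\},\ell)$ for $k=4$, add a new terminal $u$ together with $\ell$ fresh non-terminal vertices $v_1,\dots,v_\ell$ and the edges $uv_i$ and $v_ia$ for $i=1,\dots,\ell$. Any family of $\ell$ internally disjoint Steiner trees for the old terminal set extends, one tree per index $i$ via the path $u$--$v_i$--$a$, to a family of $\ell$ internally disjoint Steiner trees for the enlarged terminal set; conversely every such family must, since $u$'s only neighbours are the $v_i$ and each $v_i$ lies in at most one tree, use the $v_i$ as $\ell$ disjoint ``handles'' that can be pruned to recover $\ell$ internally disjoint $S$-trees in $G$. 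Thus the answer is preserved, $k$ grows by one, and $\ell$ remains within $[2,n^\ast-2]$ for the padded graph; iterating yields the theorem for all fixed $k\geq4$.
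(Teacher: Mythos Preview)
The padding step at the end of your proposal is correct and is exactly the device the paper uses (see the proof of Lemma~\ref{lem3}, the edge-disjoint analogue): attach each new terminal to an old terminal through $\ell$ private degree-two bridges, so that $\ell$ internally disjoint $S$-trees in $G$ correspond bijectively to $\ell$ internally disjoint $S'$-trees in $G'$. That part is fine.

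The genuine gap is your base case. You do not give a reduction from \textsc{3-SAT} to the $k=4$ problem; you describe what such a reduction ought to look like (a backbone, selector gadgets, clause gadgets, a threshold $\ell$ of order $n+m$) and then explicitly flag the converse direction as ``the real obstacle'' and ``the delicate and case-heavy core of the argument'' without carrying it out. That is precisely the content that has to be supplied: a concrete gadget graph together with a proof that \emph{every} family of $\ell$ internally disjoint $S$-trees forces a consistent satisfying assignment. As you yourself note, four-terminal Steiner trees have many shapes, and nothing in your outline rules out the cheats you list. Until the gadgets are specified and the converse is argued, the base case is a hope rather than a proof.

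For comparison, the paper does not prove Theorem~\ref{th3} at all---it is quoted from \cite{LL}---but the machinery developed here gives a much cleaner route than the one you sketch. Theorem~\ref{th5} settles the $k=3$ case by reducing the balanced-tripartite partition problem (Problem~1, itself obtained from \textsc{3-Dimensional Matching} in Lemma~\ref{lem1}) to it: add three terminals $a,b,c$, join $a$ to $\overline{U}$, $b$ to $\overline{V}$, $c$ to $\overline{W}$, and ask for $\ell=q$ internally disjoint $\{a,b,c\}$-trees. Because $a,b,c$ each have degree exactly $q$, every tree is forced to use one vertex from each part, and the converse is a one-line counting argument---none of the combinatorial explosion you anticipate with \textsc{3-SAT}. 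Composing Theorem~\ref{th5} with your (correct) padding step already yields Theorem~\ref{th3} for every $k\geq4$, so if you want an actual proof, that is the path to take.
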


\begin{theorem}\cite{LL}\label{th4}
For any fixed integer $\ell\geq 2$, given a graph $G$ and a subset $S$
of $V(G)$, deciding whether there are $\ell$ internally disjoint trees connecting $S$, namely
deciding whether $\kappa(S)\geq \ell$, is $\mathcal {N}\mathcal
{P}$-complete.
\end{theorem}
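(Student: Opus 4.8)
The plan is to establish $\mathcal{NP}$-hardness by a reduction from a satisfiability problem, having first reduced the general case to $\ell=2$. Membership in $\mathcal{NP}$ is immediate, since a family of $\ell$ subgraphs can be checked in polynomial time to consist of trees, each containing $S$, that are pairwise internally disjoint. To pass from a fixed $\ell\ge 3$ to $\ell=2$: given an instance $(G,S)$, let $G^{+}$ be $G$ together with $\ell-2$ new vertices $v_{1},\dots,v_{\ell-2}$, each joined to every vertex of $S$. Each $v_{t}$ is a non-terminal, hence lies in at most one member of any internally disjoint family of $S$-trees; so if $G^{+}$ has $\ell$ internally disjoint $S$-trees then at least two of them avoid all the $v_{t}$ and lie inside $G$, while conversely two internally disjoint $S$-trees in $G$, together with the $\ell-2$ stars centred at $v_{1},\dots,v_{\ell-2}$, form $\ell$ internally disjoint $S$-trees in $G^{+}$. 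Thus $\kappa(S)\ge\ell$ holds in $G^{+}$ if and only if $\kappa(S)\ge 2$ holds in $G$, and it remains to prove that deciding $\kappa(S)\ge 2$ is $\mathcal{NP}$-complete.

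For the latter I would reduce from \textsc{Not-All-Equal 3-Sat}. Given a formula $\phi$ with variables $x_{1},\dots,x_{n}$ and clauses $C_{1},\dots,C_{m}$ (we may assume no clause contains both a literal and its negation), build $G$ as a chain of gadgets on cut vertices $u_{1},\dots,u_{n+1}$: between $u_{i}$ and $u_{i+1}$ place two internally disjoint paths $P_{i}^{\mathrm{T}}$ and $P_{i}^{\mathrm{F}}$, and mark in the interior of $P_{i}^{\mathrm{T}}$ (resp.\ $P_{i}^{\mathrm{F}}$) a distinguished vertex for each clause containing the literal $x_{i}$ (resp.\ $\bar{x}_{i}$). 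For each clause $C_{j}$ add a vertex $c_{j}$ and join it to the distinguished vertex of $P_{i}^{\mathrm{T}}$ whenever $x_{i}\in C_{j}$, and to that of $P_{i}^{\mathrm{F}}$ whenever $\bar{x}_{i}\in C_{j}$. Set $S=\{u_{1},\dots,u_{n+1}\}\cup\{c_{1},\dots,c_{m}\}$; the construction has size polynomial in $|\phi|$.

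For the forward implication, an assignment $A$ witnessing not-all-equal satisfiability yields the complementary pair $A,\bar{A}$, each of which satisfies every clause. From $A$ form the $S$-tree $T_{A}$ consisting of the $u_{1}$--$u_{n+1}$ path running through $P_{i}^{\mathrm{T}}$ when $A(x_{i})=\mathrm{T}$ and through $P_{i}^{\mathrm{F}}$ otherwise, together with, for each $j$, one pendant edge attaching $c_{j}$ to a distinguished vertex on that path (such a vertex exists because $A$ satisfies $C_{j}$); build $T_{\bar{A}}$ from $\bar{A}$ in the same way. Their spines share only the terminals $u_{1},\dots,u_{n+1}$, and the pendant edges land on complementary, hence vertex-disjoint, paths, so $T_{A}$ and $T_{\bar{A}}$ are internally disjoint and $\kappa(S)\ge 2$. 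For the converse, suppose $T,T'$ are internally disjoint $S$-trees. The crucial structural claim is that in every gadget $i$ one of $T,T'$ uses all of $P_{i}^{\mathrm{T}}$ and the other uses all of $P_{i}^{\mathrm{F}}$: the interiors of $P_{i}^{\mathrm{T}}$ and $P_{i}^{\mathrm{F}}$ essentially separate $u_{i}$ from $u_{i+1}$, so each of $T,T'$ must traverse one of these two paths, and internal disjointness forbids them to use the same one or to meet both. Reading off $A$ from $T$ by setting $A(x_{i})=\mathrm{T}$ precisely when $T$ uses $P_{i}^{\mathrm{T}}$, the tree $T'$ then realizes exactly $\bar{A}$; and because $T$ and $T'$ each contain every $c_{j}$, the wiring of the clause vertices forces every clause to be satisfied by both $A$ and $\bar{A}$, i.e.\ $A$ is a not-all-equal satisfying assignment of $\phi$. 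Combined with membership in $\mathcal{NP}$ and the reduction to $\ell=2$, this gives the theorem.

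The step I expect to be the real obstacle is making the structural claim watertight, namely ruling out ``cheating'' trees. The danger is a tree that connects $u_{i}$ to $u_{i+1}$ by routing through a clause vertex $c_{j}$, using $c_{j}$ as an internal Steiner point that jumps from one gadget to another and so bypasses gadget $i$ entirely, leaving no truth value to be read off for $x_{i}$; a similar worry is a tree that uses a gadget path only partially. Handling this requires a more careful design of the clause attachments so that in any pair of internally disjoint $S$-trees no clause vertex can be used as such a shortcut --- for instance by replacing each direct clause-to-path edge by a small pendant gadget, or by routing the three literal-ports of a clause through a local structure, arranged so that whenever a tree meets a clause vertex it meets it as a leaf. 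Once this rigidity lemma is secured, the rest of the argument is the straightforward bookkeeping sketched above.
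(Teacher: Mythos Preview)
The paper does not prove this theorem; it is quoted from \cite{LL}. The closest in-paper argument is the edge-disjoint analog (Lemmas~\ref{lem5} and~\ref{lem4}, giving Theorem~\ref{th10}), which has the same two-step shape as your plan: settle $\ell=2$ by a satisfiability reduction, then lift to $\ell\ge 3$.

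Your lift from $\ell=2$ to general $\ell$ is correct and pleasantly simpler than the paper's Lemma~\ref{lem4}: since the added $v_t$ are non-terminals, internal disjointness forces each to lie in at most one tree, and the count goes through. (This shortcut is unavailable in the edge-disjoint setting, which is why Lemma~\ref{lem4} needs the degree-$\ell$ terminal vertices $v_i'$ and the two parallel $v_i$--$v_i'$ paths.)

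The genuine gap is exactly where you locate it, and it is structural rather than cosmetic. In your construction the $u_i$ are \emph{not} cut vertices: a clause containing literals of $x_p$ and $x_q$ with $|p-q|\ge 2$ wires $c_j$ into two non-adjacent gadgets, producing paths that bypass the intermediate $u_i$'s. Because $c_j\in S$, nothing forbids $c_j$ from having degree $2$ in one tree and degree $1$ in the other, so it may legitimately act as a routing vertex. Hence the sentence ``the interiors of $P_i^{\mathrm T}$ and $P_i^{\mathrm F}$ essentially separate $u_i$ from $u_{i+1}$'' is false as stated, and the rigidity claim --- that each tree traverses exactly one full path in every gadget --- does not follow from the construction given. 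Your proposed repairs (pendant gadgets, local clause structures) are only gestures toward a fix; as written, the converse direction of the $\ell=2$ reduction is unproved. By contrast, the paper's Lemma~\ref{lem5} sidesteps this difficulty with an \emph{asymmetric} 3-SAT gadget: degree-$2$ terminals $c_i'$ together with hub vertices $a,b$ force one of the two trees to carry every edge $c_ic_i'$ and thereby encode a single satisfying assignment, while the second tree is absorbed through $a$ and $b$; no ``both trees realise complementary assignments'' rigidity is needed.
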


In Theorem \ref{th3}, for $k=3$ the complexity problem was not
solved in \cite{LL}, and the problem is still open. So, S. Li in her
Ph.D. thesis \cite{SLi} conjectured that it is $\mathcal {N}\mathcal
{P}$-complete.

\begin{conjecture} \cite{SLi}\label{con1}
Given a graph $G$ and a $3$-subset $S$ of $V(G)$ and an integer
$ \ell \ (2\leq \ell\leq n-2)$, deciding whether there are $\ell$ internally
disjoint trees connecting $S$, namely deciding whether
$\kappa(S)\geq \ell$, is $\mathcal {N}\mathcal {P}$-complete.
\end{conjecture}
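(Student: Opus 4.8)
The plan is to reduce from a known $\mathcal{NP}$-complete problem, following the spirit of the reductions used for Theorems~\ref{th3} and~\ref{th4}, but adapted so that the terminal set $S$ has exactly three vertices. A natural starting point is a variant of the \textsc{3-Dimensional Matching} problem or, more promisingly, the \emph{maximum number of edge-disjoint (or internally disjoint) Steiner trees} instance in the disguised form of a set-packing-type problem; in fact, the cleanest route is to reduce directly from the (known $\mathcal{NP}$-complete) problem of deciding whether a graph with three prescribed terminals admits $\ell$ internally disjoint Steiner trees in a \emph{tripartite-like} gadget. Concretely, I would encode an arbitrary instance of \textsc{3-Occurrence 3-SAT} or of \textsc{Partition into Triangles}: the three terminals $x,y,z$ play the role of a ``hub'', and each internally disjoint $\{x,y,z\}$-tree is forced, by the gadget construction, to pick up one element from each of three groups in a consistent way, so that $\ell$ internally disjoint trees exist if and only if the combinatorial object (matching, truth assignment, triangle partition) exists.

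The key steps, in order, would be: (1) fix the source problem and describe a gadget $H_i$ for each ``element'' of the instance, together with connector vertices that attach $H_i$ to $x$, $y$, and $z$; (2) argue the \emph{forward direction}: given a solution to the source instance, explicitly build $\ell$ pairwise internally disjoint $\{x,y,z\}$-trees, checking that they share only $\{x,y,z\}$; (3) argue the \emph{backward direction}: given $\ell$ internally disjoint $\{x,y,z\}$-trees, show each tree must, by a counting/degree argument on the cut vertices of the gadgets, use exactly one ``private'' channel through each of the three groups, and that the induced choice is a valid solution to the source instance; (4) verify the reduction is polynomial in the size of the instance and that $\ell$ lies in the admissible range $2\le \ell\le n-2$ (padding with irrelevant vertices if necessary to meet the upper bound). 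Finally one records that membership in $\mathcal{NP}$ is immediate, since a collection of $\ell$ subgraphs can be checked in polynomial time to be trees, to contain $S$, and to be pairwise internally disjoint.

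The main obstacle is the \emph{backward direction} combined with the restriction $|S|=3$. With only three terminals, a Steiner tree connecting $S$ can be as simple as a path through a single Steiner point or a ``Y'' with one branch point, and there is a lot of freedom in how such trees can wander through the graph; ruling out ``cheating'' trees that bypass the intended structure of the gadgets — while still keeping the internal-disjointness constraint biting — is delicate. This is exactly why the $k=3$ case resisted the techniques of~\cite{LL}: the gadgets that work for $k\ge 4$ exploit the extra terminals to pin down the tree topology, and for $k=3$ one must instead design the gadgets so that the scarce ``capacity'' at the three terminals (or at a small set of cut vertices adjacent to them) forces the trees into a normal form. I expect the bulk of the work to be a careful case analysis showing that any family of $\ell$ internally disjoint $\{x,y,z\}$-trees can be rerouted, without loss, into a canonical family respecting the gadget boundaries, after which the correspondence with the source instance is routine. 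A secondary, more technical point is to make sure the construction is robust for \emph{every} $\ell$ in the stated range (not just for one convenient value), which may require a uniform padding gadget that contributes a controllable number of ``free'' trees.
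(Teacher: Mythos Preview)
Your outline is pointed in the right direction --- reduce from a packing/matching-type problem, attach three terminal vertices, and use a degree/capacity argument to force each tree to pick one element from each of three groups --- and this is indeed the skeleton of the paper's argument. However, as written this is a plan rather than a proof, and it misjudges where the real work lies.

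The paper's proof is a clean two-step reduction. First it isolates an intermediate problem (their \emph{Problem~1}): given a balanced tripartite graph on parts $\overline U,\overline V,\overline W$ of size $q$, decide whether the vertex set can be partitioned into $q$ triples, one vertex from each part, each inducing a connected subgraph. This is shown $\mathcal{NP}$-complete by a gadget reduction from \textsc{3-Dimensional Matching}; all the delicate case analysis you anticipate is absorbed here, at the level of the intermediate problem, not at the level of Steiner trees. The second step is then almost trivial: add three new vertices $a,b,c$, join $a$ to all of $\overline U$, $b$ to all of $\overline V$, $c$ to all of $\overline W$, set $S=\{a,b,c\}$ and $\ell=q$. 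Because $a,b,c$ each have degree exactly $q$, any family of $q$ internally disjoint $S$-trees must use each incident edge exactly once, so each tree meets $\overline U,\overline V,\overline W$ in exactly one vertex apiece and these triples partition $V(G)$ into connected pieces. There is no rerouting, no normal-form argument, no case analysis on tree topology at this stage; the tight degree constraint does all the work. Your expectation that ``the bulk of the work'' will be a rerouting argument for the backward direction is thus misplaced: the right intermediate problem makes that direction a two-line counting argument.

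One further correction: your secondary concern about making the construction ``robust for every $\ell$ in the stated range'' reflects a misreading of the statement. The integer $\ell$ is part of the \emph{input}; to prove $\mathcal{NP}$-completeness you only need your reduction to output \emph{some} admissible $\ell$ (here $\ell=q$, and one checks $2\le q\le n-2$ for nontrivial instances). No padding gadget for ``free'' trees is needed.
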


Since $\kappa_k(G)=min\{\kappa(S)\}$, where the minimum is taken
over all $k$-subsets $S$ of $V(G)$, S. Li also considered the
complexity of the problem of deciding whether $\kappa_k(G)\geq
\ell$, and conjectured that it is $\mathcal
{N}\mathcal{P}$-complete.

\begin{conjecture} \cite{SLi}\label{con2}
For a fixed integer $k\geq 3$, given a graph $G$ and an integer
$ \ell \ (2\leq \ell\leq n-2)$, the problem of deciding whether
$\kappa_k(G)\geq \ell$, is $\mathcal {N}\mathcal {P}$-complete.
\end{conjecture}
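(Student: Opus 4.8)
\noindent The plan is to prove Conjecture~\ref{con2} by showing that the problem ``given $G$ and $\ell$, is $\kappa_k(G)\ge \ell$?'' belongs to $\mathcal{N}\mathcal{P}$ and is $\mathcal{N}\mathcal{P}$-hard. For the first part: since $k$ is fixed, $V(G)$ has only $\binom{n}{k}=O(n^{k})$ subsets of size $k$, so a certificate that exhibits, for every such $S'$, a family of $\ell$ subgraphs of $G$ has size $O(n^{k+2})$ (each tree has at most $n-1$ edges and $\ell\le n-2$), and one checks in polynomial time that each listed subgraph is a tree containing the corresponding $S'$ and that any two subgraphs of the same family meet only in $S'$ and share no edge. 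Hence the problem lies in $\mathcal{N}\mathcal{P}$. For $\mathcal{N}\mathcal{P}$-hardness I would reduce from the local problem ``given $H$ and a $k$-subset $S$ of $V(H)$, is $\kappa_H(S)\ge \ell$?'', which is $\mathcal{N}\mathcal{P}$-complete for every fixed $k\ge 4$ by Theorem~\ref{th3} and, by the affirmative answer to Conjecture~\ref{con1}, also for $k=3$.

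Given an instance $(H,S,\ell)$ with $S=\{s_{1},\dots,s_{k}\}$, I first preprocess: if $H$ is disconnected or some $s_{i}$ has degree below $\ell$ then $\kappa_H(S)<\ell$ and I output a fixed ``no''-instance; otherwise I subdivide every edge of $H$ having both ends in $S$, which leaves the value $\kappa_H(S)$ unchanged (a subdivision vertex has degree $2$ and is always suppressible in a Steiner tree) and makes $S$ independent with each $s_{i}$ having at least $\ell$ neighbours outside $S$. Then I build a graph $G'$ from the preprocessed $H$ by adjoining a highly connected ``booster'' on new vertices so that: (a) every $k$-subset $S'\ne S$ of $V(G')$ has $\ell$ internally disjoint Steiner trees in $G'$; and (b) $\kappa_{G'}(S)\ge \ell$ if and only if $\kappa_H(S)\ge \ell$ (ideally $\kappa_{G'}(S)=\kappa_H(S)$). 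If (a) and (b) hold then $\kappa_k(G')=\min_{|S'|=k}\kappa_{G'}(S')\ge \ell$ exactly when $\kappa_{G'}(S)\ge \ell$, i.e.\ exactly when $\kappa_H(S)\ge \ell$, and since $(H,S,\ell)\mapsto (G',\ell)$ is polynomial-time computable this gives the hardness.

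The main obstacle is to obtain (a) and (b) at the same time. Property (a) is the ``padding'' part: once $S$ is independent and each $s_{i}$ has at least $\ell$ neighbours outside $S$, a $k$-set $S'$ that contains a vertex outside $S$ should be connectable $\ell$ times by routing each member of $S'\cap S$ to the booster along $\ell$ distinct external neighbours and completing each tree inside the booster, provided the booster is large and generously attached to $V(H)\setminus S$ (together with a short argument that the $\ell$ trees may be chosen internally disjoint). Property (b) is the delicate one: the naive choices of booster --- a set of $\ell$ universal vertices, or a clique joined to all of $V(H)\setminus S$ --- also raise $\kappa(S)$, since $S$ reaches the booster through its own external neighbours, while attaching the booster to $S$ only across a small cut would instead spoil (a) for the many $k$-sets that meet $S$. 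Thus the booster must be attached so that every Steiner tree for $S$ that uses it can be rerouted inside $H$; pinning down such an attachment --- or, equivalently, showing that the instances produced by the reductions behind Theorem~\ref{th3} and Conjecture~\ref{con1} can be taken so that $S$ already realises the minimum of $\kappa(\cdot)$ over all $k$-subsets, in which case the identity map reduces the local problem to the global one --- and then verifying (a) and (b) is where the bulk of the work will lie.
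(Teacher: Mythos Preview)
The paper's own argument for Conjecture~\ref{con2} is a single sentence: from $\kappa_k(G)=\min_S\kappa(S)$ it asserts that deciding $\kappa_k(G)\ge\ell$ ``is as hard as'' deciding $\kappa(S)\ge\ell$ for a given $S$, notes membership in $\mathcal{N}\mathcal{P}$, and stops. No explicit many-one reduction is supplied, and the step you worry about --- that hardness of the local problem transfers to the global one --- is simply asserted. So your proposal is not taking a different route so much as attempting to fill in what the paper leaves implicit.

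Your diagnosis is sound and the plan is reasonable, but the proposal is, by your own admission, unfinished: you describe the booster you would like and the two properties (a),(b) it must satisfy, yet you neither construct it nor verify those properties. The tension you flag is real --- structure generous enough to give $\ell$ internally disjoint trees for every $S'\neq S$ will typically let $S$ reach new trees as well, even if the booster is attached only to $V(H)\setminus S$, since trees for $S$ can detour through it via the external neighbours of $S$. Your alternative line, checking that in the specific reductions behind Theorem~\ref{th3} and Theorem~\ref{th5} the designated $S$ already minimises $\kappa(\cdot)$ over all $k$-subsets (so that the identity map is the reduction), is probably the more tractable of the two, but it too requires an argument you have not given, and it is not obviously true of those constructions as written. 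As it stands you have correctly located the gap and sketched two strategies for closing it, but not yet a proof.
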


In this paper, we will confirm that these two conjectures are true.

For the generalized $k$-edge-connectivity $\lambda_k(G)$, it is also
natural to consider its computational complexity problem: for any
two positive integers $k$ and $\ell$, given a $k$-subset $S$ of
$V(G)$, is there a polynomial-time algorithm to determine whether
$\lambda(S)\geq \ell$ ?

If both $k$ and $\ell$ are fixed integers, we will reduce it to the
problem in Theorem \ref{th2}, and prove that there is a
polynomial-time algorithm to determine whether $\lambda_k(G)\geq
\ell$. If one of $k$ and $\ell$ is not fixed, then the problem turns
out to be $\mathcal {N}\mathcal {P}$-complete.

The rest of this paper is organized as follows. In next section we
give the proofs of Conjectures \ref{con1} and \ref{con2}. Section
$3$ contains the hardness results of the generalized
edge-connectivity.

\section{Proofs of the two conjectures}

In this section, we focus on solving Conjectures \ref{con1} and
\ref{con2}. In order to show that these conjectures are correct, we
first introduce a basic $\mathcal {N}\mathcal {P}$-complete problem
and a new problem.

\noindent\textbf{3-DIMENSIONAL MATCHING (3-DM)}: Given three sets
$U,\ V$ and $W$ with $|U|=|V|=|W|$, and a subset $T$ of $U\times
V\times W$, decide whether there is a subset $M$ of $T$ with
$|M|=|U|$ such that whenever $(u,v,w)$ and $(u',v',w')$ are distinct
triples in $M$, we have $u\neq u',$ $v\neq v',$ and $w\neq w'$ ?

\noindent\textbf{Problem 1:} Given a tripartite graph $G=(V,E)$ with
three partitions $(\overline{U},\overline{V},\overline{W})$, and
$|\overline{U}|=|\overline{V}|=|\overline{W}|=q$, decide whether
there is a partition of $V$ into $q$ disjoint $3$-sets
$V_1,V_2,\ldots, V_q$ such that every
$V_i=\{v_{i_1},v_{i_2},v_{i_3}\}$ satisfies that $v_{i_1}\in
\overline{U}$, $v_{i_2}\in \overline{V}$, $v_{i_3}\in \overline{W}
$, and $G[V_i]$ is connected ?

By reducing $3$-DM to Problem $1$, we can get the following result.

\begin{lemma}\label{lem1}
Problem 1 is $\mathcal {N}\mathcal {P}$-complete.
\end{lemma}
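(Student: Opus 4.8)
The plan is to exhibit a polynomial-time reduction from 3-DIMENSIONAL MATCHING to Problem 1, which, together with the easy observation that Problem 1 lies in $\mathcal{N}\mathcal{P}$ (a guessed partition into $3$-sets can be checked for connectivity and for meeting each part in polynomial time), establishes $\mathcal{N}\mathcal{P}$-completeness. Given an instance of 3-DM consisting of sets $U,V,W$ with $|U|=|V|=|W|=n$ and a family $T\subseteq U\times V\times W$, I would build a tripartite graph $G$ whose three parts $\overline{U},\overline{V},\overline{W}$ are essentially copies of $U$, $V$ and $W$ (so $q=n$), and encode each triple $(u,v,w)\in T$ by edges that make $\{u,v,w\}$ a connected subgraph while keeping $G$ tripartite. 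The naive attempt — just add edges $uv$, $vw$ inside each triple — fails, because a partition into connected $3$-sets could pick up a ``connected'' triple whose connectivity is witnessed by edges coming from two different triples of $T$; so the reduction must prevent such spurious triples.

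The key device is to ensure that the only way a triple $\{u,v,w\}$ with $u\in\overline{U}$, $v\in\overline{V}$, $w\in\overline{W}$ can be connected in $G$ is for $(u,v,w)$ to be a member of $T$. One clean way to do this is to introduce, for every triple $t=(u,v,w)\in T$, a small gadget rather than bare edges: for instance attach private subdivision-type structure, or — more simply — make the adjacency ``$u$ adjacent to $v$'' depend on the triple by replacing each element of $U$, $V$, $W$ by a bundle of $|T|$ clones, one clone per triple it appears in, and then for triple $t$ join the $t$-clones of its three elements into a path. The partition into connected $3$-sets is then forced to group clones triple-by-triple, and a valid partition of all of $V(G)$ exists if and only if $T$ contains a perfect matching $M$ with $|M|=n$; the clones not used by $M$ are handled by adding, for each clone, an alternative ``trivial'' connected configuration (e.g.\ a pendant triangle-free padding structure local to that clone) so that leftover clones can always be packed. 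I would choose whichever bookkeeping keeps the construction polynomial in $|T|$ and verify: (i) $G$ is tripartite with equal parts; (ii) a 3-DM solution yields a valid partition; (iii) a valid partition yields a 3-DM solution, using the forcing property to read off the matching.

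The main obstacle is step (iii): arguing that every feasible partition of $V(G)$ into connected tripartite $3$-sets must respect the triple structure, i.e.\ that there are no ``mixed'' connected $3$-sets assembled from edges of different triples and no way for the padding to absorb clones that should have been matched. This is where the gadget design has to be tight, and where I expect to spend the bulk of the proof: I would pin down exactly which pairs of vertices are adjacent in $G$, show that a connected $3$-set $\{a,b,c\}$ with one vertex in each part must contain at least two edges all of which lie in a single triple-gadget, and then run a counting argument on the $n$ parts versus the padding to conclude that exactly $n$ of the chosen $3$-sets are genuine triples forming a 3-dimensional matching. Once the forcing lemma is in hand, both directions of the reduction are routine.
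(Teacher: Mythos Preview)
Your high-level plan---reduce from 3-DM, attach a per-triple gadget, and recover the matching from a feasible partition via a forcing/counting argument---is exactly the strategy the paper uses. Where your proposal diverges, and where it currently has a genuine gap, is in the gadget itself.

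In your clone construction you replace each element $x\in U\cup V\cup W$ by one copy per triple containing it, and for each triple $t$ join the three $t$-clones by a path; you then add per-clone ``padding'' so that unused clones can be absorbed. The difficulty is that this construction throws away the very constraint that makes 3-DM hard, namely that each original element be covered \emph{exactly once}. With clones, nothing ties the different copies of the same element together: a feasible partition can take every triple-path as a $3$-set (so a partition always exists, regardless of whether $T$ has a matching), or take padding for all clones and no triple-paths at all, or any mixture. Your counting argument cannot rescue this, because in the sketch both the ``triple chosen'' and ``triple not chosen'' configurations consume the same number of vertices and hence the same number of $3$-sets; there is no parameter that is forced to equal $n$. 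The padding also creates a second problem you do not address: when a triple \emph{is} chosen, its three clones form one set, but the six padding vertices attached to them must still be partitioned into connected tripartite $3$-sets, and you have not said how they are interconnected to make that possible.

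The paper avoids all of this by \emph{keeping the original vertices} $U\cup V\cup W$ and attaching, for each triple $T_i$, an $18$-vertex gadget $V_i^0$ with two modes: either the gadget is partitioned into six $3$-sets entirely inside $V_i^0$ (triple not selected), or into seven $3$-sets that additionally absorb $u_i,v_i,w_i$ (triple selected). Because the original $u\in U$ sits in exactly one partition set, two selected gadgets cannot share an element, so the selected gadgets form a partial matching; and the global count $7\ell+6(m-\ell)=q=n+6m$ forces $\ell=n$, hence a perfect matching. The bimodality with different set-counts ($7$ versus $6$) is precisely the missing idea in your sketch: it is what lets the counting pin down $|M|$, and keeping the original vertices is what encodes the exactly-once constraint. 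Your proposal correctly anticipates that the backward direction and the gadget design are where the work lies, but the concrete gadget you suggest does not do that work.
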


\begin{proof}
It is easy to see that Problem $1$ is in $\mathcal {N}\mathcal {P}$
since given a partition of $V(G)$ into $q$ disjoint $3$-sets $V_i\
(i=1,2,\ldots, q)$, one can check in polynomial time that every
$V_i=\{v_{i_1},v_{i_2},v_{i_3}\}$ satisfies that $v_{i_1}\in
\overline{U} $, $v_{i_2}\in \overline{V}$, $v_{i_3}\in \overline{W}
$, and $G[V_i]$ is connected.

We now prove that $3$-DM is polynomially reducible to this problem.

Given three sets $U,\ V$ and $W$ with $|U|=|V|=|W|=n$, and a subset
$T$ of $U\times V\times W$. Let $T=\{T_1,T_2,\ldots,T_m\}$. We will
construct a tripartite graph
$G[\overline{U},\overline{V},\overline{W}]$ with
$|\overline{U}|=|\overline{V}|=|\overline{W}|=q$ such that the
desired partition exists for $G$ if and only if there is a subset
$M$ of $T$ with $|M|=n$ and whenever $(u,v,w)$ and $(u',v',w')$ are
distinct triples in $M$, we have $u\neq u',$ $v\neq v'$ and $w\neq
w'$.

For each $T_i=(u_i,v_i,w_i)$, we add $18$ new vertices
$V_i^0=\{t_{i_1},t_{i_2},\ldots, t_{i_{18}}\}$ and $26$ edges
$E_i^0$ which are shown in Figure $1$. Thus $G[\overline{U},\overline{V},\overline{W}]$ is
defined by
 $$\overline{U}=U\bigcup \{t_{i_3},t_{i_6},t_{i_7},t_{i_{10}},t_{i_{13}}, t_{i_{16}}: 1\leq i \leq m\}$$
 $$\overline{V}=V\bigcup \{t_{i_1},t_{i_4},t_{i_9},t_{i_{12}},t_{i_{14}}, t_{i_{17}}: 1\leq i \leq m\}$$
 $$\overline{W}=W\bigcup \{t_{i_2},t_{i_5},t_{i_8},t_{i_{11}},t_{i_{15}}, t_{i_{18}}: 1\leq i \leq m\}$$
 $$V=\overline{U} \cup \overline{V}\cup\overline{W},\  E=\bigcup_{i=1}^{m}E_i^0.$$

\begin{figure}[h,t,b,p]
\begin{center}
\scalebox{0.9}[0.9]{\includegraphics{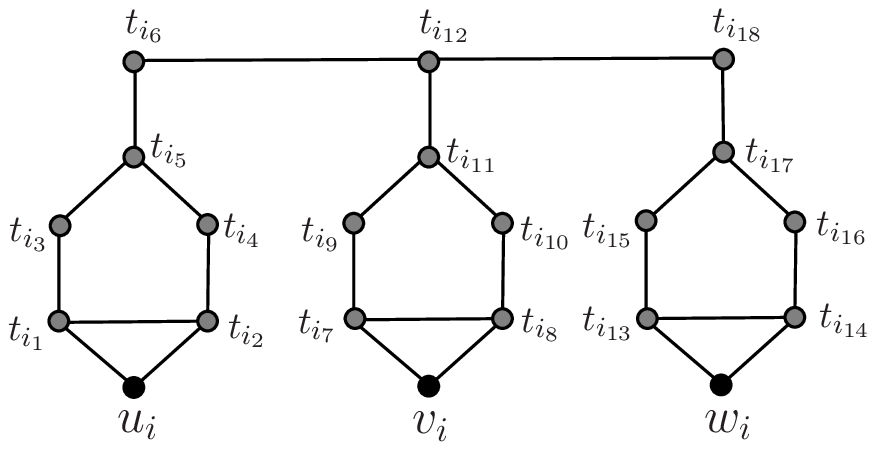}}\\
Figure 1. Graphs for Lemma \ref{lem1}.
\end{center}
\end{figure}

Note that $|V|=3n+18m,$ $|E|=26m$. Thus this instance can be
constructed in polynomial time from a $3$-DM instance. Now that
$q=n+6m$.

If there is a subset $M$ of $T$ with $|M|=n$, and whenever $(u,v,w)$
and $(u',v',w')$ are distinct triples in $M$ we have $u\neq u',$
$v\neq v',$ and $w\neq w'$, then the corresponding partition
$V=V_1\cup V_2\cup \ldots \cup V_q$ is given by taking
$\{u_i,t_{i_1},t_{i_2}\}, \{v_i,t_{i_7},t_{i_8}\},
\{w_i,t_{i_{13}},t_{i_{14}}\},\{t_{i_3},t_{i_{4}},t_{i_{5}}\},$
$\{t_{i_9},t_{i_{10}},t_{i_{11}}\},\{t_{i_{15}},$
$t_{i_{16}},t_{i_{17}}\},$ $\{t_{i_6},t_{i_{12}},t_{i_{18}}\}$ from
the vertices of $V_i^0\cup T_i$ whenever $T_i=(u_i,v_i,w_i)$ is in
$M$, and by taking $\{t_{i_1},t_{i_2},t_{i_3}\},$ $
\{t_{i_4},t_{i_5},t_{i_6}\},\ \{t_{i_7},t_{i_8},t_{i_9}\},$
$\{t_{i_{10}},t_{i_{11}},t_{i_{12}}\},\ \{t_{i_{13}},$
$t_{i_{14}},t_{i_{15}}\},\ \{t_{i_{16}},t_{i_{17}},t_{i_{18}}\}$
from the vertices of $V_i^0$ whenever $T_i=(u_i,v_i,w_i)$ is not in
$M$.

Since $|M|=n$, $|T\backslash M|=m-n$, we can find $7n+6(m-n)=n+6m=q$
partition sets, each set consists of three vertices which belong to
$\overline{U},\overline{V},\overline{W},$ respectively, and they
induce a connected subgraph.

Conversely, let $V_1, V_2,\ldots, V_q$ be the desired partition of
$V(G)$. In the following, we call a $3$-set $\{u,v,w\}$ \emph{a partition set}, if
there is some $j$ such that $\{u,v,w\}=V_j$. Then we choose $T_i\in
M$ if $\{t_{i_6},t_{i_{12}},t_{i_{18}}\}$ is a partition set.

Now we claim that $|M|=n,$ and whenever $(u,v,w)$ and $(u',v',w')$
are distinct triples in $M$ we have $u\neq u',$ $v\neq v'$ and
$w\neq w'$. Indeed, let $T_i=(u_i,v_i,w_i)$. If
$\{t_{i_6},t_{i_{12}},t_{i_{18}}\}$ is a partition set, then either
$\{t_{i_3},t_{i_{4}},t_{i_{5}}\}$ and $\{u_i,t_{i_1},t_{i_2}\}$ are
the partition sets, or $\{t_{i_1},t_{i_3},t_{i_5}\}$ and
$\{u_i,t_{i_2},t_{i_4}\}$ are the partition sets. In either cases,
$u_i$ must belong to a partition set with the other two elements
belong to $V_i^0$. Similar thing is true for $v_i$ and $w_i$. If
$\{t_{i_6},t_{i_{12}},t_{i_{18}}\}$ is not a partition set, then
$\{t_{i_6},t_{i_{11}},t_{i_{12}}\}$ or $\{t_{i_4},t_{i_5},t_{i_6}\}$
is a partition set. But $\{t_{i_6},t_{i_{11}},t_{i_{12}}\}$ can not
be a partition set. If so, then $\{t_{i_8},t_{i_{10}},v_i\}$,
$\{t_{i_7},t_{i_8},t_{i_9}\}$ or $\{t_{i_7},t_{i_8},v_i\}$ must be a
partition set, and no matter in which cases, $t_{i_9}$ or
$t_{i_{10}}$ can not be in a partition set.
 Thus only $\{t_{i_4},t_{i_5},t_{i_6}\}$ is a partition set. Similarly,
  $\{t_{i_1},t_{i_2},t_{i_3}\},\ \{t_{i_7},t_{i_8},t_{i_9}\},$
$\{t_{i_{10}},t_{i_{11}},t_{i_{12}}\},\
\{t_{i_{13}},t_{i_{14}},t_{i_{15}}\},\
\{t_{i_{16}},t_{i_{17}},t_{i_{18}}\}$ must be partition sets. Then
$u_i,v_i,w_i$ can not be in any partition sets with some vertices in
$V_i^0$.

If $T_i=(u_i,v_i,w_i)$ and $T_j=(u_j,v_j,w_j)$ are distinct triples
in $M$, then $u_i$ is in a partition set with the other two elements
in $V_i^0$, and $u_j$ is in a partition set with the other two
elements in $V_j^0$, and thus $u_i\neq u_j$, and so do $v_i\neq v_j$
and $w_i\neq w_j$. Assume that $|M|=\ell$. If $T_i\in M$, then there
are $7$ partition sets in $V_i^0\cup T_i$.  If $T_i\notin M$, then
there are $6$ partition sets in $V_i^0$. Since
$7\ell+6(m-\ell)=q=n+6m$, $\ell=n$, that is $|M|=n$. The proof is
now complete.
\end{proof}

Now we prove that Conjecture $1$ is true by reducing Problem $1$ to
it.

\begin{theorem}\label{th5}
Given a graph $G$, a $3$-subset $S$ of $V(G)$ and an integer $\ell$ $(2\leq
\ell\leq n-2)$, the problem of deciding whether $G$ contains $\ell$
internally disjoint trees connecting $S$ is $\mathcal {N}\mathcal
{P}$-complete.
\end{theorem}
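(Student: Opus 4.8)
The plan is to reduce Problem 1 (shown $\mathcal{NP}$-complete in Lemma \ref{lem1}) to the decision problem at hand, taking the target number of trees to be $\ell=q$. Membership in $\mathcal{NP}$ is immediate: given $\ell$ subgraphs of $G$, one verifies in polynomial time that each is a tree containing $S$ and that they are pairwise internally disjoint.

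For the reduction, start from a tripartite graph $G=(V,E)$ with parts $(\overline U,\overline V,\overline W)$ of common size $q$ (we may assume $q\ge 2$, instances with $q\le 1$ being trivial). Build $G'$ by keeping all of $E$ and adding three new vertices $x,y,z$, joining $x$ to every vertex of $\overline U$, $y$ to every vertex of $\overline V$, and $z$ to every vertex of $\overline W$. Put $S=\{x,y,z\}$ and $\ell=q$. Then $n':=|V(G')|=3q+3$, so $2\le\ell\le n'-2$, and the construction is polynomial. The easy direction: if $V=V_1\cup\cdots\cup V_q$ is a valid Problem 1 partition with $V_j=\{u_j,v_j,w_j\}$, $u_j\in\overline U$, $v_j\in\overline V$, $w_j\in\overline W$ and $G[V_j]$ connected, let $T_j$ be a spanning tree of $G'[V_j]$ together with the edges $xu_j,yv_j,zw_j$; each $T_j$ is an $S$-tree, and since the $V_j$ are pairwise disjoint the $T_j$ are internally disjoint, so $\kappa(S)\ge q$.

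The converse is the crux. Suppose $T_1,\dots,T_q$ are internally disjoint $S$-trees in $G'$. Since $N_{G'}(x)=\overline U$, $N_{G'}(y)=\overline V$, $N_{G'}(z)=\overline W$ are pairwise disjoint, each $T_j$ must contain at least one vertex of each part, hence at least three internal (non-$S$) vertices. The internal vertex sets of the $T_j$ are pairwise disjoint subsets of $V$, and $|V|=3q$, so each $T_j$ has exactly three internal vertices, one in each part, say $u_j\in\overline U$, $v_j\in\overline V$, $w_j\in\overline W$, and $\{u_j,v_j,w_j\}_{j=1}^{q}$ partitions $V$. Within the six-vertex tree $T_j$, the vertex $x$ has no neighbour other than $u_j$, and likewise $y$ only $v_j$ and $z$ only $w_j$; deleting the leaves $x,y,z$ leaves a connected graph on $\{u_j,v_j,w_j\}$ that is a subgraph of $G'[\{u_j,v_j,w_j\}]=G[\{u_j,v_j,w_j\}]$. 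Thus each $V_j:=\{u_j,v_j,w_j\}$ induces a connected subgraph of $G$ with one vertex per part, giving a valid Problem 1 partition.

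I expect the main obstacle to be exactly the converse direction, namely ruling out $S$-trees of "non-canonical" shape (trees using several vertices of one part, or extra internal vertices). This is handled by the counting argument above: the pigeonhole bound $|V|=3q$ forces each tree to be minimal with precisely one representative per part, and the fact that $x,y,z$ are forced to be leaves then transfers connectivity of the trees to connectivity of the induced triples. Combining both directions, $\kappa(S)\ge q$ in $G'$ if and only if the Problem 1 instance is a yes-instance, which establishes $\mathcal{NP}$-completeness and hence Conjecture \ref{con1}.
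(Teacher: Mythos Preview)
Your proof is correct and follows essentially the same reduction as the paper: add three new vertices $a,b,c$ (your $x,y,z$) joined respectively to $\overline U,\overline V,\overline W$, set $S=\{a,b,c\}$ and $\ell=q$, and use the degree/counting argument to show that $q$ internally disjoint $S$-trees correspond exactly to a valid Problem~1 partition. Your write-up is in fact more explicit than the paper's in the converse direction (spelling out the pigeonhole step and why $x,y,z$ must be leaves) and also checks the range $2\le\ell\le n'-2$, but the underlying idea is identical.
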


\begin{proof}
It is easy to see that this problem is in $\mathcal {N}\mathcal{P}$.

Let $G$ be a tripartite graph with partition
$(\overline{U},\overline{V},\overline{W})$ and
$|\overline{U}|=|\overline{V}|=|\overline{W}|=q$. We will construct
a graph $G'$, and a $3$-subset $S$ and an integer $\ell$ such that
there are $\ell$ internally disjoint trees connecting $S$ in $G'$ if
and only if $G$ contains a partition of $V(G)$ into $q$ disjoint
sets $V_1,V_2,\ldots, V_q$ each having three vertices, such that
every $V_i=\{v_{i_1},v_{i_2},v_{i_3}\}$ satisfies that $v_{i_1}\in
\overline{U} $, $v_{i_2}\in \overline{V}$, $v_{i_3}\in \overline{W}
$, and $G[V_i]$ is connected.

We define $G'$ as follows:

$\bullet$ $V(G')=V(G)\cup \{a,b,c\}$;

$\bullet$ $E(G')=E(G)\cup \{au:u\in \overline{U}\}\cup \{bv:v\in \overline{V}\}\cup
\{cw:w\in\overline{W}\}$.

Then we define $S=\{a,b,c\}$, and $\ell=q$.

If there are $q$ internally disjoint trees connecting $S$ in $G'$,
then, since $a,b$ and $c$ all have degree $q$, each tree contains a
vertex from $\overline{U}$, a vertex from $\overline{V}$ and a
vertex from $\overline{W}$, and they induce a connected subgraph.
Since these $q$ trees are internally disjoint, they form a partition
of $V(G)$.

Conversely, if $V_1,V_2, \ldots, V_q$ is a partition of $V(G)$ each
having three vertices, such that every
$V_i=\{v_{i_1},v_{i_2},v_{i_3}\}$ satisfies that  $v_{i_1}\in
\overline{U} $, $v_{i_2}\in \overline{V}$, $v_{i_3}\in \overline{W}
$, and $G[V_i]$ is connected, then let $T_i$ be the spanning tree of
$G[V_i]$ together with the edges $av_{i_1}, bv_{i_2},cv_{i_3}$,
where $V_i=\{v_{i_1},v_{i_2},v_{i_3}\}$. It is easy to check that
$T_1,T_2,\ldots,T_q$ are the desired internally disjoint trees
connecting $S$.
\end{proof}

Now from Theorem \ref{th3} and Theorem \ref{th5}, if $k\geq 3$ is a
fixed integer and $\ell$ is not a fixed integer, the problem of
deciding whether $\kappa(S)\geq \ell$ is $\mathcal
{N}\mathcal{P}$-complete. Since $\kappa_k(G)= min\{\kappa(S) :
S\subseteq V(G) , |S|=k\},$ we have that the problem of deciding
whether $\kappa_k(G)\geq \ell$ is as hard as the problem of deciding
whether $\kappa(S)\geq \ell$. Moreover, the problem of deciding
whether $\kappa_k(G)\geq \ell$ is in $\mathcal {N}\mathcal{P}$, and
so it is $\mathcal {N}\mathcal{P}$-complete. This shows that
Conjecture 2 is true.

\begin{theorem} \label{th6}
For a fixed integer $k\geq 3$, given a graph $G$ and an integer $\ell$
$2\leq \ell\leq n-2$, the problem of deciding whether
$\kappa_k(G)\geq \ell$, is $\mathcal {N}\mathcal {P}$-complete.
\end{theorem}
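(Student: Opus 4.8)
The plan is to reduce from the problem in Theorem~\ref{th5}, namely deciding whether a graph contains $\ell$ internally disjoint trees connecting a given $3$-subset $S$. Since that problem (Conjecture~1) has already been shown to be $\mathcal{N}\mathcal{P}$-complete, and since $\kappa_k(G)=\min\{\kappa(S):S\subseteq V(G),\,|S|=k\}$ for $|S|=k$, we need only connect the ``local'' quantity $\kappa(S)$ over a fixed $3$-set with the ``global'' quantity $\kappa_k(G)$. First I would observe that membership in $\mathcal{N}\mathcal{P}$ is immediate: given $\ell$ subgraphs one can verify in polynomial time that each is a tree, that each contains every vertex of a candidate $k$-set, and that they are pairwise internally disjoint; but one must be a little careful because the defining minimum ranges over exponentially many $k$-subsets, so the certificate for $\kappa_k(G)\geq\ell$ should be interpreted as: for the ``no'' direction we exhibit a single bad $k$-set, and for the ``yes'' direction we rely on the co-problem — more cleanly, we just reduce so that the answer is forced by one distinguished $k$-set.

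The heart of the reduction: starting from an instance $(G,S,\ell)$ of the Theorem~\ref{th5} problem with $S=\{s_1,\dots,s_k\}$ (here $k=3$, but the construction should be written for general fixed $k\geq 3$ so it proves Theorem~\ref{th6} in full generality), I would build a graph $G^{*}$ in which the $k$-set $S$ attains the overall minimum $\kappa(S)$, while every other $k$-subset of $V(G^{*})$ has local connectivity at least $\ell$. The standard device is to pad $G$: add a large clique (or a sufficiently highly connected gadget) $K$ on many new vertices and join it densely to $V(G)$ while keeping the vertices of $S$ relatively ``isolated'' from $K$, so that any $k$-set other than $S$ — one that includes at least one clique vertex, or that sits entirely inside the clique — inherits connectivity $\geq\ell$ from the clique's abundant internally disjoint trees. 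Then $\kappa_k(G^{*})\geq\ell$ if and only if $\kappa(S)\geq\ell$ in the padded graph, and one checks (by a short monotonicity/locality argument) that adding the clique does not change $\kappa(S)$ for the original $S$ because any tree connecting $S$ that uses a clique vertex can be rerouted, or simply because the minimum over all $k$-sets is realized at $S$ by construction. Alternatively — and this is probably the cleanest route given what is already in the excerpt — I would cite the paragraph immediately preceding the statement: combining Theorem~\ref{th3} (for $k\geq 4$) and Theorem~\ref{th5} (for $k=3$) gives that deciding $\kappa(S)\geq\ell$ is $\mathcal{N}\mathcal{P}$-complete for every fixed $k\geq 3$, and then a generic gadget forcing $S$ to be the minimizing $k$-set completes the argument.

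I expect the main obstacle to be the locality-versus-globality gap: one must ensure that in $G^{*}$ no $k$-subset other than the distinguished $S$ can have $\kappa(S')<\ell$, which requires the padding gadget to be simultaneously (i) highly connected enough that any $k$-set touching it automatically achieves $\ell$ internally disjoint trees, and (ii) attached so as not to inflate $\kappa(S)$ for the original set $S$. Getting both conditions to hold, and proving the ``no $k$-set slips below $\ell$'' claim rigorously (a case analysis on how a $k$-set can intersect the old vertices, the clique, and the connectors), is the delicate part; the rest — the forward and backward implications relating $\kappa_k(G^{*})\geq\ell$ to $\kappa(S)\geq\ell$, and the $\mathcal{N}\mathcal{P}$ membership — is routine. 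In the write-up I would state the reduction, verify the equivalence of the two decision problems, invoke Theorems~\ref{th3} and~\ref{th5} for the hardness of the local problem, and conclude that deciding $\kappa_k(G)\geq\ell$ is $\mathcal{N}\mathcal{P}$-complete.
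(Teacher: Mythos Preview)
The paper's own argument for Theorem~\ref{th6} is only the short paragraph immediately preceding its statement: it combines Theorems~\ref{th3} and~\ref{th5} to conclude that deciding $\kappa(S)\geq\ell$ is $\mathcal{N}\mathcal{P}$-complete for every fixed $k\geq 3$, then asserts that because $\kappa_k(G)=\min_{S}\kappa(S)$ the global problem is ``as hard as'' the local one, and finishes by claiming $\mathcal{N}\mathcal{P}$ membership. No padding gadget and no explicit reduction from the local problem to the global one is written out.

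Your proposal is therefore \emph{more} careful than the paper on precisely the point you identify as the main obstacle, the locality-versus-globality gap. The paper glosses over it with the phrase ``as hard as,'' while you correctly note that one must force the distinguished $k$-set $S$ to realize the minimum, and you sketch a clique-padding device to achieve this. In the concrete instance coming out of Theorem~\ref{th5} the vertices $a,b,c$ each have degree exactly $q=\ell$, so attaching a highly connected blob to $V(G')\setminus\{a,b,c\}$ leaves $\kappa(\{a,b,c\})$ unchanged while pushing $\kappa(S')\geq\ell$ for every other $k$-set $S'$; that is the right idea and would make the hardness direction rigorous. The paper does not supply this step, so your route is a genuine (and needed) refinement rather than a different approach.

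One small correction: your hesitation about $\mathcal{N}\mathcal{P}$ membership is unnecessary. Since $k$ is fixed, there are only $\binom{n}{k}=O(n^{k})$ many $k$-subsets, so a certificate listing $\ell\leq n$ internally disjoint $S$-trees for each such $S$ has polynomial total size and is verifiable in polynomial time. You do not need to appeal to the complement problem or to force a single distinguished $k$-set merely to get membership in $\mathcal{N}\mathcal{P}$; that device is required only for the hardness direction.
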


\section{Hardness results on generalized edge-connectivity}

In this section we consider the computational complexity of the
generalized edge-connectivity $\lambda_k(G)$. Since
$\lambda_k(G)=min\{\lambda(S): S\subseteq V(G), |S|=k\}$, we first
consider $\lambda(S)$, and get the following result.

\begin{theorem}\label{th7}
Given two fixed positive integers $k$ and $\ell$, for any graph $G$
the problem of deciding whether $\lambda_k(G)\geq \ell$ can be
solved by a polynomial-time algorithm.
\end{theorem}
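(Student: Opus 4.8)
The plan is to reduce, for each fixed $k$-subset $S$, the problem of deciding whether $\lambda(S)\ge \ell$ to the decision problem of Theorem \ref{th2} — deciding whether there are $\ell$ internally disjoint trees connecting a $k$-set — by building an auxiliary graph $G'$ in which edge-disjointness in $G$ is converted into internal-vertex-disjointness in $G'$. Granting such a reduction, $\lambda_k(G)\ge\ell$ is decided by running the test for $\lambda(S)\ge\ell$ over all $k$-subsets $S\subseteq V(G)$; there are $O(n^{k})$ of them, polynomially many since $k$ is fixed, so the total running time stays polynomial.

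For the reduction I would construct $G'$ from $G$ and $S=\{s_1,\dots,s_k\}$ in two steps. First subdivide every edge $e=uv$ of $G$ once, inserting a new vertex $m_e$. Since internally disjoint trees may not share a non-terminal vertex, but edge-disjoint trees may, I then replace each vertex $v\notin S$ of degree $d=d_G(v)$, with incident edges $e_1,\dots,e_d$, by $d$ copies $v^{(1)},\dots,v^{(d)}$ joined pairwise into a clique $K_d$, and attach $v^{(j)}$ to $m_{e_j}$; each $s\in S$ is kept as a single vertex, joined to $m_e$ for every edge $e$ incident to $s$. Set $S'=S$. Because $\sum_v d_G(v)=2|E(G)|$, the graph $G'$ has $O(|V(G)|+|E(G)|)$ vertices, is constructible in polynomial time, and $|S'|=k$.

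The crux is the equivalence $\lambda(S)\ge \ell$ in $G$ $\iff$ there are $\ell$ internally disjoint trees connecting $S'$ in $G'$. For the forward direction, take $\ell$ edge-disjoint trees $T_1,\dots,T_\ell$ in $G$ connecting $S$, which we may assume minimal; at each vertex $v$ the edge sets $F^v_i=E(T_i)\cap E_G(v)$ are then pairwise disjoint. Lift $T_i$ to $G'$ by using $m_e$ for every $e\in E(T_i)$, the copies $v^{(j)}$ with $e_j\in F^v_i$ together with a spanning tree of the corresponding sub-clique, and the connecting edges; this gives a tree $T'_i$ connecting $S'$, and any two of these trees can only meet in some $m_e$ with $e$ in two of the $E(T_i)$, or in some $v^{(j)}$ with $e_j$ in two of the $F^v_i$ — both impossible — so they are internally disjoint. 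Conversely, take $\ell$ internally disjoint trees $T'_1,\dots,T'_\ell$ connecting $S'$ in $G'$, again minimal. Every $m_e$ has degree $2$ in $G'$, hence any $m_e$ present in a minimal $T'_i$ has degree exactly $2$ there; contracting each clique-blob back to its vertex $v$ and reading ``$m_e\in T'_i$'' as ``$T_i$ uses edge $e$'' projects $T'_i$ onto a connected subgraph of $G$ containing $S$, and a spanning tree $T_i$ of this subgraph uses $e$ only when $m_e\in T'_i$. Since the $T'_i$ share no vertex outside $S'$, the trees $T_1,\dots,T_\ell$ are edge-disjoint. Feeding $G'$ and $S'$ into Theorem \ref{th2} — legitimate because $k$ and $\ell$ are both fixed — then yields a polynomial-time test for $\lambda(S)\ge\ell$, and enumerating over all $k$-subsets $S$ completes the proof.

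I expect the only real difficulty to lie in pinning down the gadget and checking both directions of the equivalence carefully: in particular, justifying that one may restrict to minimal trees in $G'$ so that each $m_e$ is a genuine degree-$2$ vertex and the clique-blobs are not traversed wastefully, and verifying that no two of the constructed trees share an edge or an internal vertex. The polynomial size of $G'$ and the outer enumeration over the $O(n^k)$ $k$-subsets are routine.
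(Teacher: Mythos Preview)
Your proposal is correct and follows the same overall strategy as the paper: reduce the question $\lambda(S)\ge\ell$ to the internally-disjoint version handled by Theorem~\ref{th2} via an auxiliary graph $G'$ with $S'=S$, then enumerate over the $O(n^k)$ choices of $S$. The only difference is the gadget. The paper takes $V(G')=V(G)\cup V(L(G))$ and $E(G')$ equal to the line-graph edges together with the vertex--edge incidences; in the forward direction it subdivides each edge and then bypasses every non-terminal vertex $v$ by a path through the edge-vertices incident to $v$, and in the converse it replaces each line-graph edge $e_{j_1}e_{j_2}$ by a path through the common endpoint and then smooths the degree-$2$ edge-vertices. Your construction instead subdivides and then explodes each non-terminal vertex into a clique on $d_G(v)$ copies. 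Both gadgets accomplish the same thing---turning shared edges into shared non-terminal vertices---and both verifications hinge on the same observation that the subdivision vertices have degree~$2$, so in a minimal Steiner tree they are traversed, not dangled. Your version makes the vertex-splitting explicit at the cost of larger cliques (size $\sum_v\binom{d(v)}{2}$ rather than the paper's $|E(L(G))|$, which is the same quantity); the paper's version keeps the original vertices of $G$ around and routes past them using line-graph edges. Neither buys anything the other does not.
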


\begin{proof}
Given a connected graph $G$ of order $n$ and a $k$-subset $S$ of
$V(G)$. Let $V(G)=\{v_1,v_2,\ldots,v_n\}$ and $E(G)=\{e_1,e_2,\ldots,e_m\}$.
We will construct a graph $G'$ and a $k$-subset $S'$ of $V(G')$ such that there are
$\ell$ edge-disjoint trees connecting $S$ in $G$ if and only if there are $\ell$ internally
disjoint trees connecting $S'$ in $G'$.

We define $G'$ as follows:

$\bullet$ $V(G')=V(G)\cup
V(L(G))=\{v_1,v_2,\ldots,v_n,e_1,e_2,\ldots,e_m\}$;

$\bullet$ $E(G')=\{e_ie_j:e_ie_j\in E(L(G))\}\cup \{v_ie_j:e_j~is~incident~to~v_i~in~G\}$;\\
where $L(G)$ is the line graph of $G$. We define $S'=S$.

If there are $\ell$ edge-disjoint trees connecting $S$ in $G$, say
$T_1,T_2,\ldots,T_{\ell}$. First for each tree $T_i \ (1\leq i\leq
\ell)$, we replace every edge $e_j=v_{j_1}v_{j_2}$ by a path
$v_{j_1}e_jv_{j_2}$. The obtained graph $T_i'$ now is a tree in
$G'$. Clearly, the trees $T_1',T_2',\ldots,T_{\ell}'$ are $\ell$
edge-disjoint trees connecting $S'=S$ in $G'$. Consider the tree
$T_i'$. If there is a vertex $v\in V(T_i)\backslash S$ such that its
neighbors in $T_i'$ are $e_{i_1},e_{i_2},\ldots,e_{i_p}$, then we
delete the vertex $v$ and its incident edges
$ve_{i_1},ve_{i_2},\ldots,ve_{i_p}$, add a path
$e_{i_1}e_{i_2}\ldots e_{i_p}$. We do this operation for all the
vertex $v\in V(T_i)\backslash S$ for $1\leq i\leq \ell$. The
resulting trees are denoted by $T_1'',T_2'',\ldots, T_{\ell}''$. It
is easy to check that they are internally disjoint trees connecting
$S'$ in $G'$.

Conversely, if there are $\ell$ internally disjoint trees
$T_1,T_2,\ldots,T_{\ell}$ connecting $S'$ in $G'$. Consider any tree
$T_i \ (1\leq i\leq \ell)$. If there is  an edge $e_{j_1}e_{j_2}$ in
$E(T_i)$, by the definition of $E(G')$, $e_{j_1}$ and $e_{j_2}$ are
adjacent in $G$ and hence they have a common vertex $v_j$ in $G$.
Note that $v_je_{j_1}, v_je_{j_2}$ are also edges of $G'$. Thus we
replace the edge $e_{j_1}e_{j_2}$ by a path $e_{j_1}v_je_{j_2}$. We
do this for all the edges of this type in $T_i$. The resulting
connected graph is denoted by $G_i$. Now there is no such edge
$e_ie_j$ in $G_i$ and $d_{G_i}(e) \leq 2.$ If $d_{G_i}(e) = 1,$ we
just delete it. If $d_{G_i}(e) = 2,$ there are two vertices
$v_{j_1}$ and $v_{j_2}$ adjacent with $e$, where $v_{j_1}$ and
$v_{j_2}$ are the endpoints of $e$ in $G$, so we delete the vertex
$e$ and add an edge $v_{j_1}v_{j_2}$, then the obtained graph $G_i'$
is a connected graph with $S \subseteq V(G_i')$ . Let $T_i'$ be the
spanning tree of $G_i'$. It is easy to check that $T_1',T_2',\ldots,
T_{\ell}'$ are $\ell$ edge-disjoint trees connecting $S$ in $G$.


From the above reduction, if we want to know whether there are
$\ell$ edge-disjoint trees connecting $S$ in $G$, we can construct a
graph $G'$, and decide whether there are $\ell$ internally disjoint
trees connecting $S'$ in $G'$. By Theorem \ref{th2}, since $k$ and
$\ell$ are fixed, the problem of deciding whether there are $\ell$
internally disjoint trees connecting $S$ can be solved by a
polynomial-time algorithm. Therefore, the problem of deciding
whether there are $\ell$ edge-disjoint trees connecting $S$ can be
solved by a polynomial-time algorithm. The proof is complete.
\end{proof}

Now we consider the problem of deciding whether $\lambda_k(G)\geq
\ell$, for $k\geq3$ a fixed integer but $\ell$ a not fixed integer.
At first, we denote the case when $k=3$ by Problem 2.

\noindent\textbf{Problem 2:} Given a graph $G$, a $3$-subset $S$ of
$V(G)$, and an integer $\ell \ (2\leq \ell \leq n-2)$, decide
whether there are $\ell$ edge-disjoint trees connecting $S$, that
is, $\lambda(S)\geq \ell$ ?

Notice that the reduction from Problem $1$ to the problem in Theorem
\ref{th5} can also be used to be the reduction from Problem $1$ to
Problem $2$ since the $q$ internally disjoint trees connecting $S$
in $G'$ are also $q$ edge-disjoint trees connecting $S$ in $G'$. On
the other hand, if there are $q$ edge-disjoint trees connecting $S$
in $G'$, since the degrees of $a,b$ and $c$ are $q$ we have that
each tree $T_i$ contains one vertex $u_i$ in $\overline{U}$, one vertex $v_i$
in $\overline{V}$, and one vertex $w_i$ in $\overline{W}$, then $\{u_i, v_i, w_i\}\
(1\leq i\leq q)$ constitute a partition of $V(G)$, and each induces
a connected subgraph. Thus the following lemma holds.

\begin{lemma}\label{lem2}
Problem 2 is $\mathcal {N}\mathcal{P}$-complete.
\end{lemma}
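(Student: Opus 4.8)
The plan is to establish Lemma~\ref{lem2} by essentially reusing the reduction already constructed in the proof of Theorem~\ref{th5}, together with Lemma~\ref{lem1}. First I would observe that Problem~2 is in $\mathcal{N}\mathcal{P}$: given a collection of $\ell$ subgraphs of $G$, one checks in polynomial time that each is a tree containing $S$ and that they are pairwise edge-disjoint. Next, I would take the same graph $G'$ built from an instance of Problem~1 in the proof of Theorem~\ref{th5} --- namely $V(G')=V(G)\cup\{a,b,c\}$ with $a$ joined to all of $\overline{U}$, $b$ to all of $\overline{V}$, $c$ to all of $\overline{W}$ --- and set $S=\{a,b,c\}$, $\ell=q$.

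The forward direction is immediate: if Problem~1 has a solution $V_1,\dots,V_q$, the trees $T_i$ constructed there (the spanning tree of $G[V_i]$ plus the three edges to $a,b,c$) are not only internally disjoint but a fortiori pairwise edge-disjoint, so $\lambda(S)\geq q$. For the converse, suppose $G'$ has $q$ edge-disjoint trees $T_1,\dots,T_q$ connecting $S$. The key point is the degree argument: in $G'$ each of $a,b,c$ has degree exactly $q$, so across $q$ edge-disjoint trees every edge at $a$, every edge at $b$, and every edge at $c$ is used exactly once, and each $T_i$ picks up exactly one neighbor $u_i\in\overline{U}$ of $a$, one neighbor $v_i\in\overline{V}$ of $b$, and one neighbor $w_i\in\overline{W}$ of $c$. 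Since $T_i$ is connected and contains $a,b,c$, the set $V(T_i)\cap V(G)$ induces a connected subgraph of $G$ containing $u_i,v_i,w_i$; shrinking to the minimal such connected set and noting that $V(G)$ is exhausted (the $u_i$ are distinct, the $v_i$ are distinct, the $w_i$ are distinct, and $|\overline{U}|=|\overline{V}|=|\overline{W}|=q$) yields the desired partition of $V(G)$ into connected triples. Since $G$ is tripartite with parts $\overline{U},\overline{V},\overline{W}$, each triple automatically has one vertex in each part.

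The one subtlety --- the step I would flag as needing care rather than being truly hard --- is the passage from "$T_i$ is an edge-disjoint tree" to "the $V(T_i)\cap V(G)$ partition $V(G)$." With only edge-disjointness (not internal disjointness) the trees could in principle share internal vertices of $G$, so one must argue that every vertex of $V(G)$ lies in exactly one $T_i$. This follows because each $T_i$, restricted to $G$, must reach $u_i$, $v_i$, $w_i$ via a path inside $G$ (the only edges leaving $\{a,b,c\}$ go to $\overline{U}\cup\overline{V}\cup\overline{W}$, each used once), and a counting argument on the $3q$ vertices of $G$ forces these connected pieces to be exactly the $q$ triples; any shared vertex would leave some vertex of $G$ uncovered, contradicting that all $q$ trees together must span a partition. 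Assembling these observations and invoking Lemma~\ref{lem1} ($3$-DM $\le_p$ Problem~1 $\le_p$ Problem~2) completes the proof that Problem~2 is $\mathcal{N}\mathcal{P}$-complete.
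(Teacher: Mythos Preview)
Your approach mirrors the paper's: reuse the construction from Theorem~\ref{th5} and invoke the degree constraint at $a,b,c$. The forward direction and the observation that each $T_i$ determines unique vertices $u_i\in\overline{U}$, $v_i\in\overline{V}$, $w_i\in\overline{W}$ (so that the triples $\{u_i,v_i,w_i\}$ partition $V(G)$ as sets) are both correct. The gap is your final step, where you claim these triples are connected in $G$. Edge-disjoint trees may share internal vertices, so $T_i\setminus\{a,b,c\}$ can have more than three vertices, and your ``counting argument'' does not force it down to a triple: nothing says the trees together form a vertex-partition of $V(G)$, and a shared vertex need not leave any vertex uncovered.

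Here is a concrete failure of the reduction. Take $q=2$, $\overline{U}=\{u_1,u_2\}$, $\overline{V}=\{v_1,v_2\}$, $\overline{W}=\{w_1,w_2\}$, and $E(G)=\{u_1v_1,\,u_1v_2,\,u_1w_2,\,v_1w_1,\,u_2v_2\}$. One checks that no partition of $V(G)$ into two connected transversal triples exists (in every candidate partition one of $u_2$, $w_1$, $w_2$ is isolated within its triple), so Problem~1 is a NO-instance. Yet in $G'$ the trees
\[
T_1:\ au_1,\ u_1v_1,\ bv_1,\ v_1w_1,\ cw_1,\qquad
T_2:\ au_2,\ u_2v_2,\ bv_2,\ u_1v_2,\ u_1w_2,\ cw_2
\]
are edge-disjoint and both contain $S=\{a,b,c\}$; note that $T_2$ borrows the vertex $u_1$. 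Hence the converse direction, and with it the reduction to Problem~2, fails as written. (The paper's own one-sentence argument for this direction has the same gap.) A correct proof would need to modify the gadget so that no vertex of $G$ can serve as an extra Steiner point, or to supply a genuinely different argument.
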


Now we show that for a fixed integer $k \geq 4$, replacing the
$3$-subset of $V(G)$ with a $k$-subset of $V(G)$ in Problem 2, the
problem is still $\mathcal {N}\mathcal{P}$-complete, which can be
proved by reducing Problem 2 to it.

\begin{lemma}\label{lem3}
For any fixed integer $k\geq 4$, given a graph $G$, a $k$-subset $S$
of $V(G)$, and an integer
$ \ell \ (2\leq \ell\leq n-2)$, deciding whether
there are $\ell$ edge-disjoint trees connecting $S$, namely deciding
whether $\lambda(S)\geq \ell$, is $\mathcal {N}\mathcal
{P}$-complete.
\end{lemma}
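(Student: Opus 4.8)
The plan is to reduce Problem 2 (which is $\mathcal{NP}$-complete by Lemma~\ref{lem2}) to the $k$-subset version for each fixed $k \geq 4$. Membership in $\mathcal{NP}$ is immediate: given $\ell$ claimed edge-disjoint trees, one checks in polynomial time that each is a tree containing $S$ and that they are pairwise edge-disjoint. So the work is the reduction.

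Given an instance $(G, S, \ell)$ of Problem 2 with $S = \{s_1, s_2, s_3\}$, I would build $G'$ by attaching $k-3$ new ``pendant-like'' gadget vertices $x_4, x_5, \ldots, x_k$ and set $S' = \{s_1, s_2, s_3, x_4, \ldots, x_k\}$, so $|S'| = k$. The goal is to force every Steiner tree connecting $S'$ to consist of a Steiner tree connecting $S$ in $G$ together with fixed private edges reaching each $x_j$, so that $\ell$ edge-disjoint $S'$-trees in $G'$ correspond exactly to $\ell$ edge-disjoint $S$-trees in $G$. The naive attempt — join each $x_j$ to a single vertex of $G$ — fails because the bottleneck degree at $x_j$ is then only $1$, capping $\lambda(S')$ at $1$ regardless of $\lambda(S)$. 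To avoid this, I would instead join each new vertex $x_j$ to \emph{every} vertex of $V(G)$ (or at least to enough vertices that $\deg_{G'}(x_j)$ is comfortably larger than the target $\ell$), and then argue that any family of $\ell$ edge-disjoint $S'$-trees can be ``pruned'': restricting each tree to $G$ and taking a spanning tree of the resulting connected subgraph of $G$ containing $S$ yields $\ell$ edge-disjoint $S$-trees in $G$, since the edges incident to the $x_j$'s are disjoint from $E(G)$ and can simply be discarded. Conversely, given $\ell$ edge-disjoint $S$-trees $T_1, \ldots, T_\ell$ in $G$, since $\ell \leq n-2 = |V(G)|-2$ I can pick for each $i$ and each $j \in \{4, \ldots, k\}$ a distinct vertex $y_{i,j} \in V(G)$ — for instance using $\ell$ distinct vertices of $T_i$ reserved per tree, or more simply distinct vertices across the whole construction — and add the edge $x_j y_{i,j}$ to $T_i$; the resulting trees $T_i'$ connect $S'$ and remain edge-disjoint because the attachment edges $x_j y_{i,j}$ are all distinct. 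One must keep $\ell \leq n'-2$ where $n' = |V(G')| = n + (k-3)$, which holds automatically.

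The main obstacle is the converse direction's bookkeeping: I need enough ``room'' to attach all $k-3$ new vertices to each of the $\ell$ trees using edges that are globally distinct, while ensuring the target value $\ell$ is not artificially inflated or deflated by the gadget. The clean way is to route the edges from $x_j$ so that within tree $T_i$ the vertex $x_j$ is a leaf attached via an edge unused by any other $T_{i'}$; since $x_j$ is joined to all $n$ vertices of $G$ and $\ell \le n-2$, and the trees $T_i$ are edge-disjoint in $G$ (but may share vertices), I may need to be slightly careful choosing the attachment vertex $y_{i,j}$ so the edge $x_j y_{i,j}$ is fresh — but as these edges lie outside $E(G)$ entirely, distinctness across $i$ is the only constraint, and $\ell \le n$ attachment vertices suffice. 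For the forward direction the subtle point is that an $S'$-tree in $G'$ could in principle route \emph{through} some $x_j$ (using two edges at $x_j$) rather than treating it as a leaf; but deleting $x_j$ and its incident edges from such a tree may disconnect it. To handle this, I would argue that because the $x_j$'s have no edges among themselves and each $x_j$ lies in exactly the trees that use it, one can re-route: if $T_i$ uses $x_j$ as a cut vertex with $G$-neighbors $a, b$, then $a$ and $b$ are both in $V(G)$ and, since $T_i \cap G$ together with the other $x_{j'}$-attachments is connected on $S$, one discards the $x_j$-detour and the remaining structure still yields a spanning tree of a connected subgraph of $G$ containing $S$. Making this re-routing argument airtight — or else choosing the gadget so detours through $x_j$ are impossible (e.g. by also making $a,b$ adjacent in $G'$, or by a parity/degree argument) — is where the real care is needed; everything else is routine.
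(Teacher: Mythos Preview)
Your reduction has a genuine gap in the backward direction, and it is not merely a bookkeeping issue. By joining each new vertex $x_j$ to all of $V(G)$ you create shortcuts that manufacture extra edge-disjoint $S'$-trees with no counterpart in $G$. Concretely, take $k=4$, let $V(G)=\{s_1,s_2,s_3,u,v\}$ with edge set $\{s_1u,\,s_2u,\,s_3u\}$, so that $\lambda_G(S)=1$, and set $\ell=2$ (here $n=5$, so $2\le\ell\le n-2$). In your $G'$ the star with edges $x_4s_1,\,x_4s_2,\,x_4s_3$ is an $S'$-tree using no edge of $G$ at all; together with the star at $u$ extended by the edge $x_4u$, you obtain two edge-disjoint $S'$-trees. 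Thus $\lambda_{G'}(S')\ge 2$ while $\lambda_G(S)=1$, and the equivalence fails. Your proposed ``discard the $x_j$-detour and what remains is a connected subgraph of $G$ containing $S$'' repair cannot work here, since one of the two trees lives entirely inside the gadget and leaves nothing behind in $G$.

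The paper's construction avoids exactly this pitfall by making the degree of each new terminal equal to $\ell$ and anchoring it at a \emph{single} vertex of $S$. For each $i=1,\dots,k-3$ it adds a vertex $a^i$ together with $\ell$ internally disjoint length-two paths $a^i a^i_j v_1$ (so $\deg_{G'}(a^i)=\ell$ and each intermediate vertex $a^i_j$ has degree $2$). Since the $\ell$ edge-disjoint $S'$-trees must each contain $a^i$, a counting argument forces $a^i$ to be a leaf in every tree, and the only way to reach the rest of $S'$ from that leaf is along one of the paths $a^i a^i_j v_1$ into $G$. Stripping off these pendant paths then returns $\ell$ edge-disjoint $S$-trees in $G$, and the forward direction is immediate. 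The key design principle you were missing is that the new terminals should have degree \emph{exactly} $\ell$ and feed into a single point of $S$, rather than being richly connected to all of $G$.
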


\proof Clearly, the problem is in
$\mathcal {N}\mathcal{P}$.

Given a graph $G$, a $3$-subset $S=\{v_1, v_2, v_3\}$ of $V(G)$ and
an integer $\ell \ (2\leq \ell \leq n-2)$, we construct a graph
$G'=(V',E')$ and a $k$-subset $S'$ of $V(G')$ and let $\ell'=\ell$
such that there are $\ell$ edge-disjoint trees connecting $S$ in $G$
if and only if there are $\ell$ edge-disjoint trees connecting $S'$
in $G'$.

We define $G'$ as follows:

$\bullet$ $V(G')=V(G)\cup \{a^i: 1\leq i\leq k-3\}\cup \{a_j^i:
1\leq i\leq k-3, 1\leq j\leq \ell \}.$

$\bullet$ $E(G')=E(G)\cup \{v_1a_j^i:1\leq i\leq k-3, 1\leq j\leq
\ell\}\cup \{a^ia_j^i:1\leq i\leq k-3, 1\leq j\leq \ell\}$.

Let $S'=S\cup\{a^1, a^2,\ldots, a^{k-3}\}$.

It is easy to check that there are $\ell$ edge-disjoint trees
connecting $S$ in $G$ if and only if there are $\ell$ edge-disjoint
trees connecting $S'$ in $G'$. The proof is complete.\qed

From Lemma \ref{lem2} and Lemma \ref{lem3}, we obtain the following
result.

\begin{theorem}\label{th8}
For any fixed integer $k\geq 3$, given a graph $G$, a $k$-subset $S$
of $V(G)$, and an integer
$ \ell \ (2\leq \ell\leq n-2)$, deciding whether
there are $\ell$ edge-disjoint trees connecting $S$, namely deciding
whether $\lambda(S)\geq \ell$, is $\mathcal {N}\mathcal
{P}$-complete.
\end{theorem}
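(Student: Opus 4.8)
The plan is to derive Theorem~\ref{th8} by combining the two preceding lemmas. Lemma~\ref{lem2} establishes that Problem~2, the $k=3$ case, is $\mathcal{N}\mathcal{P}$-complete, and Lemma~\ref{lem3} establishes the same for every fixed $k\geq 4$. Together these two statements cover all fixed integers $k\geq 3$, so the theorem follows immediately once both lemmas are in hand; there is no extra work to do at the level of the theorem itself. Accordingly, the real content lies in the proofs of the lemmas, and in particular in the reduction of Lemma~\ref{lem3}, since Lemma~\ref{lem2} is obtained essentially for free by re-examining the reduction already carried out in Theorem~\ref{th5} (the $q$ internally disjoint trees built there are automatically edge-disjoint, and conversely edge-disjointness together with the degree-$q$ constraint at $a,b,c$ forces the partition structure).

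For Lemma~\ref{lem3} I would argue as follows. First, membership in $\mathcal{N}\mathcal{P}$ is clear: a witness is a list of $\ell$ subgraphs, and one checks in polynomial time that each is a tree containing $S'$ and that no two share an edge. Next, I would set up the reduction from Problem~2. Starting from an instance $(G,S=\{v_1,v_2,v_3\},\ell)$ of Problem~2, I build $G'$ by adjoining, for each $i$ with $1\leq i\leq k-3$, a new vertex $a^i$ together with $\ell$ private "connector" vertices $a_j^i$ ($1\leq j\leq \ell$), joining each $a_j^i$ to $v_1$ and to $a^i$; then $S'=S\cup\{a^1,\dots,a^{k-3}\}$ and $\ell'=\ell$. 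The key structural observation is that the only edges incident to $a^i$ are the $\ell$ edges $a^ia_j^i$, so in any family of $\ell$ edge-disjoint trees connecting $S'$ each tree must use exactly one such edge $a^i a_{j}^i$, hence exactly one connector vertex $a_j^i$, and that connector has no other neighbour besides $v_1$; thus each tree reaches $a^i$ through a path of the form $v_1$–$a_j^i$–$a^i$, and these paths are pairwise edge-disjoint across the $\ell$ trees and across the $k-3$ values of $i$.

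The equivalence then splits into two directions. Given $\ell$ edge-disjoint $S$-trees $T_1,\dots,T_\ell$ in $G$, extend $T_j$ by appending, for each $i$, the path $v_1 a_j^i a^i$; since these appended paths use only the fresh edges, the resulting subgraphs are trees, they contain $S'$, and they remain pairwise edge-disjoint, giving $\ell$ edge-disjoint $S'$-trees in $G'$. Conversely, given $\ell$ edge-disjoint $S'$-trees $T_1',\dots,T_\ell'$ in $G'$, the observation above lets me strip off, from each $T_j'$, the pendant structure reaching every $a^i$ (delete the edge $a^i a_j^i$, the vertex $a_j^i$, and the edge $v_1 a_j^i$); what remains is a connected subgraph of $G$ containing $S$, and taking a spanning tree of each yields $\ell$ edge-disjoint $S$-trees in $G$. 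Finally $|V(G')|$ and $|E(G')|$ are polynomial in the input size (indeed linear, since $k$ and $\ell$'s contribution is controlled by the instance), so the reduction is polynomial-time, completing Lemma~\ref{lem3} and hence Theorem~\ref{th8}. The main obstacle to watch is making the "stripping" argument airtight when a connector edge is used by a tree in an unintended way—e.g. arguing that $a_j^i$ cannot serve as an internal branching vertex, which follows from $\deg_{G'}(a_j^i)=2$, and that no two trees can use the same connector edge, which follows from edge-disjointness plus the pigeonhole count $\ell$ edges for $\ell$ trees.
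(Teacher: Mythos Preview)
Your proposal is correct and follows essentially the same approach as the paper: Theorem~\ref{th8} is obtained directly by combining Lemma~\ref{lem2} (the case $k=3$) with Lemma~\ref{lem3} (the case $k\geq 4$), and your reduction for Lemma~\ref{lem3} is exactly the construction the paper uses. In fact you supply more detail on the two directions of the equivalence than the paper, which merely asserts that the correspondence is ``easy to check''; your degree-count argument at $a^i$ and $a_j^i$ is the right way to make the converse direction precise.
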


Similar to the argument in the proof of Conjecture 2, we conclude
that if $k\geq 3$ is a fixed integer but $\ell$ is not a fixed
integer, the problem of deciding whether $\lambda_k(G)\geq \ell$ is
$\mathcal {N}\mathcal{P}$-complete. This proves the next result.

\begin{theorem} \label{th9}
For a fixed integer $k\geq 3$, given a graph $G$ and an integer
$ \ell \ (2\leq \ell\leq n-2)$, the problem of deciding whether
$\lambda_k(G)\geq \ell$ is $\mathcal {N}\mathcal {P}$-complete.
\end{theorem}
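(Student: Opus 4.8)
The plan is to combine Theorem~\ref{th8} with the same minimum-over-subsets argument that was used to deduce Conjecture~2 (Theorem~\ref{th6}) from Theorem~\ref{th5}. Recall that $\lambda_k(G)=\min\{\lambda(S):S\subseteq V(G),\ |S|=k\}$. First I would observe that the decision problem ``is $\lambda_k(G)\geq\ell$?'' lies in $\mathcal{N}\mathcal{P}$: a certificate consisting of $\ell$ subgraphs can be checked in polynomial time to confirm that each is a tree containing a given $k$-set and that the $\ell$ trees are pairwise edge-disjoint; and since we only need the certificate to work for the witnessing $S$, guessing $S$ together with the trees still gives a polynomial-size certificate. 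Actually, a cleaner phrasing: the complementary direction (showing $\lambda_k(G)<\ell$) is not what we need; we just need membership in $\mathcal{N}\mathcal{P}$ for the ``$\geq\ell$'' problem, which holds because the trees themselves are the certificate once $S$ is also part of the guess, and verifying ``$\lambda(S)\geq\ell$'' is a polynomial-time check of edge-disjointness and connectivity.

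Next I would reduce the problem of Theorem~\ref{th8} to this one. Given an instance $(G,S,\ell)$ of the $\lambda(S)\geq\ell$ problem with $|S|=k$ fixed, I would produce a graph $H$ (with $S$ and $\ell$ essentially unchanged, or lightly padded) so that $\lambda_k(H)\geq\ell$ if and only if $\lambda(S)\geq\ell$ in $G$. The point is that $\lambda_k(H)\geq\ell$ forces $\lambda(S')\geq\ell$ for \emph{every} $k$-set $S'$, which is more than what Theorem~\ref{th8} gives us, so we must engineer $H$ so that the ``hard'' $k$-set $S$ is the bottleneck: every other $k$-set of $H$ should trivially admit $\ell$ edge-disjoint trees, while $S$ inherits exactly the behaviour it had in $G$. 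A standard device is to add to $G$ a sufficiently rich gadget (for instance, a large complete graph, or $\ell$ internally/edge-disjoint paths joining all the relevant vertices) attached so that it raises $\lambda(S')$ above $\ell$ for all $S'\neq S$ without creating any new edge-disjoint $S$-trees; one then checks the edge count is polynomial and the construction is computable in polynomial time.

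Alternatively—and this is probably the cleanest route, mirroring exactly the text right before Theorem~\ref{th6}—I would not build a separate reduction at all, but argue directly: by Theorem~\ref{th8} the problem of deciding $\lambda(S)\geq\ell$ is $\mathcal{N}\mathcal{P}$-complete even when $k\geq3$ is fixed; since $\lambda_k(G)\geq\ell$ iff $\lambda(S)\geq\ell$ for all $k$-sets $S$, the $\lambda_k$ problem is ``as hard as'' the $\lambda(S)$ problem in the relevant sense once one supplies the gadget that pins the minimum to a prescribed $S$. Concretely, take the instance from Lemma~\ref{lem2}/Lemma~\ref{lem3}'s reduction (where $G'$ was built from Problem~1 / Problem~2) and note that in that $G'$ the chosen $S=\{a,b,c\}$ (resp.\ its $k$-set extension) already realises the minimum, because $a,b,c$ have degree exactly $q=\ell$ while all other vertices have large degree—so $\lambda_k(G')=\lambda(S)$ directly, and the whole chain Problem~1 $\to$ Problem~2 $\to$ Theorem~\ref{th8} already yields $\lambda_k(G')\geq\ell$ iff the 3-DM instance is a yes-instance.

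The main obstacle is the last bookkeeping point: verifying that in the constructed graph the prescribed $k$-set $S$ genuinely attains the minimum of $\lambda(S')$ over all $k$-sets $S'$, i.e.\ that no other $k$-set is ``harder''. For the Problem~1 gadget this should follow from a degree argument (the apex vertices $a,b,c$ are the unique degree-$\ell$ vertices, so any $S'$ avoiding them has much larger local edge-connectivity), but it has to be stated carefully, and one must confirm the bound $2\leq\ell\leq n-2$ is respected after the construction. Once that is in place, membership in $\mathcal{N}\mathcal{P}$ plus the hardness of the pinned instance gives $\mathcal{N}\mathcal{P}$-completeness of deciding $\lambda_k(G)\geq\ell$ for every fixed $k\geq3$ with $\ell$ not fixed, which is exactly Theorem~\ref{th9}.
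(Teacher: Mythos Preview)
Your overall strategy matches the paper's: derive Theorem~\ref{th9} from Theorem~\ref{th8} via the relation $\lambda_k(G)=\min_{|S|=k}\lambda(S)$, exactly paralleling the passage from Theorem~\ref{th5} to Theorem~\ref{th6}. The paper gives no more detail than you do on the hardness side---it simply says ``similar to the argument in the proof of Conjecture~2''---so your identification of the ``bookkeeping'' issue (that the designated $S$ must realise the minimum in the constructed graph) is an honest account of where the work lies, and is not something the paper spells out either.

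There is, however, a genuine error in your membership argument. The assertion $\lambda_k(G)\geq\ell$ means $\lambda(S)\geq\ell$ for \emph{every} $k$-subset $S$, not for a single ``witnessing'' $S$; guessing one $S$ together with $\ell$ trees certifies nothing about the minimum. The correct reason the problem is in $\mathcal{N}\mathcal{P}$ is that $k$ is fixed: there are only $\binom{n}{k}=O(n^{k})$ many $k$-subsets, and a certificate lists, for each of them, $\ell\leq n$ pairwise edge-disjoint $S$-trees. This has polynomial total size and is verifiable in polynomial time.

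One further caution on your degree heuristic for the hardness bookkeeping: in the graph $G'$ built in Lemma~\ref{lem2} the apex vertices $a,b,c$ have degree $q=\ell$, but vertices coming from the tripartite graph of Lemma~\ref{lem1} need not have larger degree, so ``$a,b,c$ are the unique degree-$\ell$ vertices'' is not automatic. If you pursue this route you should either pad the non-apex vertices (e.g.\ attach extra gadget paths to raise their degrees beyond $\ell$ without creating new $S$-trees) or verify the specific structure of the Lemma~\ref{lem1} gadget; either fix keeps the reduction polynomial.
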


Now we turn to the case that $\ell$ is a fixed integer but $k$
is not a fixed integer.
At first, we consider the case $\ell=2$, and denote it by Problem 3.

\noindent\textbf{Problem 3:} Given a graph $G$, a subset $S$ of
$V(G)$, decide whether there are two edge-disjoint trees connecting
$S$, that is $\lambda(S)\geq 2$ ?

We show that Problem 3 is $\mathcal {N}\mathcal{P}$-complete by
reducing $3$-SAT to it.

\noindent\textbf{BOOLEAN 3-SATISFIABILITY (3-SAT):} Given a boolean
formula $\phi$ in conjunctive normal form with three literals per
clause, decide whether $\phi$ is satisfiable ?

\begin{lemma}\label{lem5}
 Problem 3 is $\mathcal {N}\mathcal{P}$-complete.
\end{lemma}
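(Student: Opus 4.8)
The plan is to give a polynomial-time reduction from 3-SAT to Problem 3. Given a 3-CNF formula $\phi$ with variables $x_1,\dots,x_n$ and clauses $C_1,\dots,C_m$, I would build a graph $G$ together with a terminal set $S$ so that $\lambda(S)\geq 2$ (i.e.\ $G$ contains two edge-disjoint $S$-trees) if and only if $\phi$ is satisfiable. The natural skeleton is to fix two special terminals, say $s$ and $s'$, forced to be in $S$, and to route the ``first'' $S$-tree through edges encoding a truth assignment and the ``second'' $S$-tree through the complementary edges; the union of the two edge-disjoint trees must cover all of $S$, so every gadget must be traversed, and the edge-disjointness is what forces a consistent choice of literal orientations.

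Concretely, for each variable $x_i$ I would use a small ``variable gadget'' consisting of two internally parallel routes between a pair of branch vertices $p_i,q_i$ — one route corresponding to $x_i=\text{true}$, the other to $x_i=\text{false}$ — arranged in a path $p_1,q_1=p_2,q_2=p_3,\dots$ so that any $S$-tree that must reach terminals placed ``beyond'' $q_n$ has to pick exactly one of the two routes in each gadget, and the two edge-disjoint trees are forced to pick opposite routes (this is where I put a terminal or a forced leaf to pin down the parity). For each clause $C_j$ I would attach a clause terminal $c_j\in S$ joined to the three literal-routes occurring in $C_j$; then $c_j$ can be hung off whichever of the two trees used the route of a literal that satisfies $C_j$. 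The key design point is that the tree representing the assignment can attach $c_j$ only if at least one of its three literals was set true, so all clause terminals are reachable by the two trees together iff the assignment satisfies $\phi$. Membership in $\mathcal{NP}$ is trivial (guess the two edge sets, check acyclicity, connectivity through $S$, and edge-disjointness in polynomial time), and the gadget has $O(n+m)$ vertices and edges, so the reduction is polynomial.

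The correctness proof then splits into the two implications. For the forward direction, from a satisfying assignment I would explicitly exhibit the two edge-disjoint $S$-trees: tree $T_1$ uses, in each variable gadget, the route of the literal made true, plus for each clause the edge to a satisfying literal's route and the path back along the spine to $s$; tree $T_2$ uses the complementary routes and is wired to the remaining spine edges so that it is connected, spans all of $S$, and shares no edge with $T_1$. For the converse, given two edge-disjoint $S$-trees I would argue that in each variable gadget the two trees must use the two parallel routes in opposite ways (using the forced terminals/leaves to rule out a tree using both routes or neither), read off the assignment, and then argue that a clause terminal $c_j$ can only have been connected into one of the trees via a route of a true literal, so $\phi$ is satisfied. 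The main obstacle I anticipate is the simultaneous bookkeeping in the gadget: I must ensure (a) the two trees cannot ``cheat'' by using spine edges in a way that decouples the route-choices from a genuine Boolean assignment, and (b) each $c_j$ is attachable to \emph{at most} one tree and only through a satisfying literal — getting a gadget that enforces both while keeping the two $S$-trees genuinely edge-disjoint (rather than merely internally disjoint) is the delicate part, and is why a direct reduction from 3-SAT, rather than from Problem~1 or Problem~2, is the right route here.
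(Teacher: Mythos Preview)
Your reduction is from the right source problem (3-SAT), and the variable-gadget idea with two parallel routes is sound. But there is a genuine gap in the clause side of your construction, stemming from a misreading of Problem~3. An $S$-tree must contain \emph{every} vertex of $S$; hence \emph{both} edge-disjoint trees $T_1$ and $T_2$ must individually reach every clause terminal $c_j$. Your phrasing (``$c_j$ can be hung off whichever of the two trees\dots'', ``all clause terminals are reachable by the two trees together'', ``each $c_j$ is attachable to at most one tree'') shows you are designing the gadget so that only one of the two trees picks up each $c_j$. That is not what the problem asks.

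Once you correct this, your gadget as stated no longer encodes 3-SAT. If $c_j$ is joined only to the three literal-routes of its clause and has to lie in both trees, then $T_1$ must reach $c_j$ through a literal whose route $T_1$ carries (a true literal) \emph{and} $T_2$ must reach $c_j$ through a literal whose route $T_2$ carries (a false literal). So every clause would need at least one true and at least one false literal, i.e.\ you would be encoding NAE-3-SAT rather than 3-SAT. That is still $\mathcal{NP}$-complete, so the argument is salvageable, but you have not noticed the issue and your ``converse'' paragraph would not go through as written. The paper's construction sidesteps this by making the two trees deliberately asymmetric: each clause terminal $c_i'$ has degree~$2$, with one edge $ac_i'$ to a hub vertex $a$ and one edge $c_i'c_i$ into the literal structure; similarly each variable terminal $\hat{x_i}$ has degree~$2$ to $x_i$ and $\overline{x}_i$, and a second hub $b$ is joined to all literal vertices. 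One tree is then essentially forced to use the hub edges (via the unique bridge $ab$), while the other is forced entirely into the literal/clause incidences, and it is only this second tree that must witness a satisfying assignment. If you want to keep your chain-of-parallel-routes picture, you will need an analogous escape mechanism so that the ``non-assignment'' tree can reach every $c_j$ without touching the literal routes.
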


\begin{proof}
Clearly, Problem 3 is in $\mathcal {N}\mathcal{P}$.

Let $\phi$ be an instance of $3$-SAT with clauses $c_1,c_2,\ldots,
c_m$ and variables $x_1,x_2,\ldots,x_n$. We construct a graph
$G_\phi=(V_\phi, E_\phi)$ and define a subset $S$ of $V(G_\phi)$
such that there are two edge-disjoint trees connecting $S$ if and
only if $\phi$ is satisfiable.

We define $G_\phi$ as follows: \noindent{\itshape}
\begin{eqnarray*}
\bullet~~~V(G_\phi)&=&\{\hat{x_i}, x_i, \overline{x}_i: 1\leq i\leq
n\}\cup \{c_i,c_i':1\leq i\leq m\}\cup \{a, b\};\\
\bullet~~~E(G_\phi)&=&\{\hat{x_i}x_i,\hat{x_i}\overline{x}_i:1\leq
i\leq n\}
\cup\{x_ic_j:x_i\in c_j\}\cup\{\overline{x}_ic_j:\overline{x}_i\in c_j\} \\
& &\cup\{x_1x_i, x_1\overline{x}_i:2\leq i\leq
n\}\cup\{\overline{x}_1x_i, \overline{x}_1\overline{x}_i:2\leq i\leq
n\}\cup \{ab\}~~~\\
& &\cup\{ac_i', c_ic_i': 1\leq i\leq m\}\cup\{bx_i, b\overline{x}_i:1\leq i\leq n\}.
\end{eqnarray*}

Now we define $S=\{c_i':1\leq i\leq m\}\cup \{\hat{x_i}:1\leq i\leq
n\}.$

Suppose that there are two edge-disjoint trees $T_1$ and $T_2$
connecting $S$. We know that the edge $ab$ can not be in both
$E(T_1)$ and $E(T_2)$, and so assume that $T_1$ does not contain
$ab$. Next we claim that $ac_i'\notin E(T_1)$ for $1\leq i\leq m.$
For otherwise, without loss of generality, let $ac_1'\in E(T_1)$.
Since the degree of $c_i'$ is 2, $T_1$ contains only one of the
edges $ac_i'$ and $c_i'c_i$ for $1\leq i\leq m.$ So $c_1'c_1\notin
E(T_1)$. Because $T_1$ is connected, the path from $c_1'$ to
$\hat{x_j}$ in $T_1$ must contain the edges $ac_1', ac_k',c_k'c_k$
for some $1\leq k\leq m$ and a path from $c_k$ to $\hat{x_j}$. But in this
case, the degree of $c_k'$ in $T_1$ is 2, a contradiction.
Therefore, $T_1$ contains all the edges $c_ic_i' \ (1\leq i\leq m)$,
and for each $c_i \ (1\leq i\leq m)$, there must exist some $x_j\in
V(T_i)$ such that $c_ix_j\in E(T_1)$ or $\overline{x}_j\in V(T_i)$
such that $c_i\overline{x}_j\in E(T_1)$. As $\hat{x_i}\in S \ (1\leq
i\leq n)$ and the degree of $\hat{x_i}$ is 2, $V(T_1)$ contains only
one of the neighbors of $\hat{x_i}$. If $x_i$ is contained in
$V(T_i)$, then set $x_i=1$. Otherwise, set $x_i=0$. Clearly, we
conclude that $\phi$ is satisfiable in this assignment.

On the other hand, suppose that $\phi$ is satisfiable with the
assignment $t$. We will find two edge-disjoint trees connecting $S$
as follows.

For each $1\leq i\leq m$, there must exist a $j \ (1\leq j\leq n)$
such that $x_j\in c_i$ and $t(x_j)=1$, or $\overline{x}_j\in c_i$
and $t(x_j)=0$. We then construct $T_1$ with edge set $\{c_ix_j(or\
c_i\overline{x}_j), c_ic_i':1\leq i\leq m\}.$ Obviously, $V(T_1)$
can not contain both $x_i$ and $\overline{x}_i$. If none of $x_i$
and $\overline{x}_i$ is in $V(T_i)$, we choose any one of them
belonging to $V(T_1)$. Now if $x_1\in V(T_1)$, we add $x_1x_i$ (if
$x_i\in V(T_1)$) or $x_1\overline{x}_i$ (if $\overline{x}_i\in
V(T_1)$) to $E(T_1)$, for $2\leq i\leq n$. Otherwise, if
$\overline{x}_1\in V(T_1)$, we add $\overline{x}_1x_i$ (if $x_i\in
V(T_1)$) or $\overline{x}_1\overline{x}_i$ (if $\overline{x}_i\in
V(T_1)$) to $E(T_1)$, for $2\leq i\leq n$. Finally, if $x_i\in
V(T_1)$, we add $\hat{x_i}x_i$ to $E(T_1)$, if $\overline{x}_i\in
V(T_1)$, we add $\hat{x_i}\overline{x}_i$ to $E(T_1)$. It is easy to
check that the graph $T_1$ is indeed a tree containing $S$. Now let
$T_2$ be a tree containing $ab$, $ac_i'$ for $1\leq i\leq m$, $bx_j$
and $\hat{x_j}x_j$ (if $\overline{x}_j\in V(T_1)$),
$b\overline{x}_j$ and $\hat{x_j}\overline{x}_j$ (if $x_j\in
V(T_1)$). Then we conclude that $T_1$ and $T_2$ are two
edge-disjoint trees connecting $S$. The proof is complete.
\end{proof}

Now we show that for a fixed integer $\ell \geq 3$, the problem is still
$\mathcal {N}\mathcal{P}$-complete, which can be proved by reducing Problem 3 to it.

\begin{lemma}\label{lem4}
For any fixed integer $\ell\geq 3$, given a graph $G$, a subset $S$
of $V(G)$, deciding whether there are $\ell$ edge-disjoint trees
connecting $S$, namely deciding whether $\lambda(S)\geq \ell$, is
$\mathcal {N}\mathcal {P}$-complete.
\end{lemma}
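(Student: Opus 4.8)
The plan is to reduce Problem 3 (deciding whether $\lambda(S)\geq 2$, which is $\mathcal{NP}$-complete by Lemma \ref{lem5}) to the problem of deciding whether $\lambda(S')\geq \ell$ for a fixed $\ell\geq 3$. Given an instance $(G,S)$ of Problem 3, I would construct $G'$ by attaching to $G$ a gadget that forces $\ell-2$ extra edge-disjoint trees to exist automatically, without interfering with the structure already present in $G$, so that $G'$ has $\ell$ edge-disjoint trees connecting a suitable $S'$ if and only if $G$ has $2$ edge-disjoint trees connecting $S$.

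Concretely, pick one distinguished vertex $v\in S$. For each $i$ with $1\leq i\leq \ell-2$ introduce a new vertex $b^i$, and for each pair $(i,j)$ with $1\leq i\leq \ell-2$ and $j$ ranging over the vertices of $S$, introduce a new vertex $b^i_j$ joined both to $v$ and to $b^i$; equivalently, add $\ell-2$ new ``hub'' vertices $b^i$ each connected to $v$ through a private set of $|S|$ subdivision vertices, one aimed at each element of $S$. Set $S'=S\cup\{b^1,\dots,b^{\ell-2}\}$ and $\ell'=\ell$. The point of routing each $b^i$ to $v$ via $|S|$ internally disjoint length-two paths (one ``per element of $S$'') is that a tree connecting $S'$ must pick up each $b^i$, and the cheapest way to do so is through $v$; with $|S|$ available parallel connections there is enough room for $\ell$ edge-disjoint such trees while the original part of $G$ only needs to supply $2$ of them. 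I would then check both directions: given $2$ edge-disjoint $S$-trees $T_1,T_2$ in $G$, extend $T_1$ and $T_2$ by attaching each $b^i$ through one of its subdivision vertices (distributing the $b^i$'s and using distinct subdivision vertices to keep edge-disjointness), and build the remaining $\ell-2$ trees each as a star-like tree through $v$ using the still-unused subdivision vertices together with edges of $G$ to reach the other vertices of $S$; conversely, given $\ell$ edge-disjoint $S'$-trees in $G'$, argue that removing the added vertices and edges, at least $2$ of the trees restrict to (or can be pruned to) edge-disjoint $S$-trees in $G$.

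The main obstacle is making the gadget rigid enough: I must ensure that in any family of $\ell$ edge-disjoint $S'$-trees, at least two of them are ``forced into $G$'' in the sense that their restrictions to $V(G)$ are connected and contain $S$, rather than the trees cleverly using the gadget to short-circuit connections among the old vertices of $S$. This is why every subdivision vertex $b^i_j$ has degree exactly $2$ (so it can only be used to link $v$ to $b^i$, never as a detour between two elements of $S$) and why each $b^i$ is attached only to these degree-$2$ vertices: these degree constraints prevent the gadget from carrying connectivity between distinct elements of $S$, so the $S$-connectivity inside the $\ell$ trees must come from $G$ itself, and a counting argument on how many of the $\ell$ edge-disjoint trees can avoid using $G$-edges to connect $S$ forces at least two genuine $S$-trees in $G$. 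Once the degree bookkeeping is set up correctly, both directions of the equivalence are routine, and since the problem is clearly in $\mathcal{NP}$ and the construction is polynomial, $\mathcal{NP}$-completeness follows.
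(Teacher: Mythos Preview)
Your reduction has a genuine flaw in the forward direction. Because your gadget attaches to $G$ only at the single distinguished vertex $v$, any $S'$-tree in $G'$ must already contain a full $S$-tree inside $G$: for $u,w\in S$ the unique tree-path from $u$ to $w$ cannot enter the gadget (it would have to leave and re-enter at $v$, visiting $v$ twice), so it lies entirely in $E(G)$. Hence $\ell$ edge-disjoint $S'$-trees in $G'$ yield $\ell$ edge-disjoint $S$-trees in $G$, i.e.\ $\lambda_{G'}(S')\le \lambda_G(S)$. Consequently, if the original instance satisfies $\lambda_G(S)=2$ exactly, then $\lambda_{G'}(S')\le 2<\ell$, and the implication ``$\lambda_G(S)\ge 2 \Rightarrow \lambda_{G'}(S')\ge \ell$'' fails. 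Your sketch of the forward direction implicitly assumes the extra $\ell-2$ trees can ``use edges of $G$ to reach the other vertices of $S$,'' but those edges need not exist once $T_1$ and $T_2$ have been chosen (and in general $G$ may have no further $S$-connectivity at all).

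The fix, which is exactly what the paper does, is to make $S'$ consist of \emph{new} vertices rather than $S$ itself, so that $\ell-2$ of the $S'$-trees can live entirely in the gadget. Concretely, for $S=\{v_1,\dots,v_k\}$ one introduces copies $v_1',\dots,v_k'$, joins each $v_i'$ to $v_i$ by two internally disjoint length-two paths, and adds $\ell-2$ hub vertices $a_1,\dots,a_{\ell-2}$ each adjacent to every $v_j'$; then $S'=\{v_1',\dots,v_k'\}$. Now each $v_j'$ has degree exactly $\ell$, so in any family of $\ell$ edge-disjoint $S'$-trees every $v_j'$ is a leaf; a tree using an edge $a_iv_j'$ is then forced to be the star at $a_i$, leaving precisely two trees that must pass through the length-two paths into $G$ and hence contain edge-disjoint $S$-trees. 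This degree-forcing argument is what makes the converse clean, and the forward direction is immediate because the $\ell-2$ stars use no $G$-edges.
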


\proof  Clearly, the problem is in $\mathcal {N}\mathcal{P}$. Let
$G$ and a subset $S$ of $V(G)$ be a instance of Problem 3. We will
construct a graph $G'=(V',E')$ and a subset $S'$ of $V(G')$ such
that there are two edge-disjoint trees connecting $S$ if and only if
there are $\ell$ edge-disjoint trees connecting $S'$.

Assume that $S=\{v_1,v_2, \ldots, v_k\}$. To define $G'$, we add $k$
new vertices $\{v_1',v_2', \ldots, v_k'\}$. Then for every $v_i'$,
we add two paths $v_iv_i^1v_i'$ and $v_iv_i^2v_i'$ for $1\leq i\leq
k$. Finally we add $\ell-2$ new vertices $\{a_1, a_2, \ldots,
a_{\ell-2}\}$, and join each $a_i$ to $v_1', v_2', \ldots, v_k'$ for
$1\leq i \leq \ell-2$. That is,

$\bullet$ $V(G')=V(G)\cup\{v_i':1\leq i\leq k\}\cup \{v_i^1,v_i^2:
1\leq i\leq k\}\cup \{a_j: 1\leq j\leq \ell-2\};$

$\bullet$ $E(G')=E(G)\cup\{v_iv_i^1, v_i^1v_i': 1\leq i\leq k\}
\cup\{v_iv_i^2, v_i^2v_i': 1\leq i\leq k\}\cup \{a_iv_j':
1\leq i\leq \ell-2, 1\leq j\leq k\}.$

We define $S'=\{v_1',v_2', \ldots, v_k'\}$.

Suppose that there are two edge-disjoint trees $T_1$ and $T_2$
connecting $S$ in $G$, let $T_1'$ be the tree containing $T_1$ and
the paths $v_iv_i^1v_i' \ (1\leq i\leq k)$, $T_2'$ be the tree
containing $T_2$ and the paths $v_iv_i^2v_i' \ (1\leq i\leq k)$.
Then $T_1'$ and $T_2'$ are two edge-disjoint trees connecting $S'$
in $G'$. For each $a_i \ (1\leq i\leq \ell-2)$, let $T'_{i+2}$ be
the tree with the edges $a_iv_j' \ (1\leq j\leq
k)$. Then we find $\ell$ edge-disjoint trees connecting $S'$ in
$G'$.

Conversely, suppose that there are $\ell$ edge-disjoint trees $T_1',
T_2',\ldots, T_{\ell}'$ connecting $S'$ in $G'$. Since the degree of
$v_i'$ in $G'$ is $\ell$, $v_1',v_2',\ldots,v_k'$ are all leaves in
each $T_j'$. If the tree $T_j'$ contains the edge $a_jv_i'$, then
$T_j'$ is the star with center vertex $a_j$ and leaves $v_i' \
(1\leq i\leq k).$ Otherwise, there is some vertex in $S'$ with its
degree at least two, a contradiction. Thus, among $\{T_1', T_2',\ldots, T_{\ell}'\}$,
there are $\ell-2$ trees connecting $S'$, each is a star with center vertices $a_j \
(1\leq j\leq \ell-2).$ The remaining two trees must contain the
paths $v_i'v_i^1v_i$ or $v_i'v_i^2v_i$ for $1\leq i\leq k$ and two
$S$-trees in $G$. Therefore, we find two edge-disjoint trees
connecting $S$ in $G$. The proof is complete.\qed

Combining Lemma \ref{lem5} with Lemma \ref{lem4}, we obtain the
following result.

\begin{theorem}\label{th10}
For any fixed integer $\ell\geq 2$, given a graph $G$, a subset $S$
of $V(G)$, deciding whether there are $\ell$ edge-disjoint trees
connecting $S$, namely deciding whether $\lambda(S)\geq \ell$, is
$\mathcal {N}\mathcal {P}$-complete.
\end{theorem}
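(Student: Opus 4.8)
The plan is to establish Theorem~\ref{th10} as an immediate consequence of the two lemmas that precede it, just as the combining sentence suggests. The statement asserts, for every fixed integer $\ell\geq 2$, that deciding $\lambda(S)\geq\ell$ (with $G$ and $S$ arbitrary, $k=|S|$ not fixed) is $\mathcal{N}\mathcal{P}$-complete. So first I would note that the problem is in $\mathcal{N}\mathcal{P}$: a certificate consisting of $\ell$ subgraphs can be checked in polynomial time for being trees, for each containing $S$, and for pairwise edge-disjointness. This membership claim is uniform in $\ell$ and needs nothing beyond a direct verification.

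For $\mathcal{N}\mathcal{P}$-hardness I would split into the base case $\ell=2$ and the case $\ell\geq 3$. The base case $\ell=2$ is exactly Lemma~\ref{lem5} (Problem~3), which is already proved $\mathcal{N}\mathcal{P}$-complete by reduction from 3-SAT. For any fixed $\ell\geq 3$, Lemma~\ref{lem4} gives a polynomial-time reduction from Problem~3 (the $\ell=2$ case) to the $\ell$-version of the problem, by adjoining $\ell-2$ apex vertices $a_1,\dots,a_{\ell-2}$ each joined to the newly created copies $v_1',\dots,v_k'$ of the terminals and replacing each terminal $v_i$ by the gadget consisting of two internally disjoint paths $v_iv_i^1v_i'$ and $v_iv_i^2v_i'$. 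Hence for each fixed $\ell\geq 2$ the problem is $\mathcal{N}\mathcal{P}$-hard, and together with membership in $\mathcal{N}\mathcal{P}$ we get $\mathcal{N}\mathcal{P}$-completeness. Thus the proof is essentially a two-line assembly: cite Lemma~\ref{lem5} for $\ell=2$, cite Lemma~\ref{lem4} for $\ell\geq 3$, and invoke the easy $\mathcal{N}\mathcal{P}$-membership.

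There is no real obstacle at this final step; all the substantive work has already been done in Lemmas~\ref{lem5} and~\ref{lem4}. The one point that deserves a sentence of care is that Lemma~\ref{lem4} is stated for each \emph{fixed} $\ell\geq 3$ as a separate reduction, so the hardness we conclude is ``for every fixed $\ell$'' rather than ``uniformly in $\ell$'' — but that is precisely what the theorem claims, so no strengthening is needed. If I wanted to be thorough I would also double-check that the degree-forcing argument in Lemma~\ref{lem4} (each $v_i'$ has degree exactly $\ell$ in $G'$, forcing it to be a leaf in every tree of an edge-disjoint family of size $\ell$, which in turn forces $\ell-2$ of the trees to be the stars centered at the $a_j$'s) genuinely goes through for all $\ell\geq 3$ and not merely for small $\ell$; since that argument uses only the degree count, it does. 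With those remarks in place the proof is complete.

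\begin{proof}
The problem clearly lies in $\mathcal{N}\mathcal{P}$: given $\ell$ subgraphs of $G$, one verifies in polynomial time that each is a tree, that each contains $S$, and that they are pairwise edge-disjoint. For $\ell=2$, the $\mathcal{N}\mathcal{P}$-hardness is Lemma~\ref{lem5}. For any fixed $\ell\geq 3$, Lemma~\ref{lem4} provides a polynomial-time reduction from the $\ell=2$ case to the $\ell$ case, so the problem is $\mathcal{N}\mathcal{P}$-hard for this $\ell$ as well. Hence for every fixed $\ell\geq 2$ the problem is $\mathcal{N}\mathcal{P}$-complete.
\end{proof}
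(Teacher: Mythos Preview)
Your proposal is correct and matches the paper's approach exactly: the paper states Theorem~\ref{th10} as an immediate consequence of Lemma~\ref{lem5} (the case $\ell=2$) together with Lemma~\ref{lem4} (the case $\ell\geq 3$), without giving any additional argument. Your added remarks on $\mathcal{N}\mathcal{P}$-membership and on the degree-forcing argument in Lemma~\ref{lem4} are sound but go slightly beyond what the paper spells out.
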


\small

\end{document}